\documentclass[11pt,letterpaper]{article}

\usepackage[utf8]{inputenc}
\usepackage{comment}
\usepackage{microtype}
\usepackage{graphicx}
\usepackage{appendix}
\usepackage{subcaption}
\usepackage{booktabs} % for professional tables
\usepackage{scalerel}
\usepackage[margin=1in]{geometry}
\usepackage[most]{tcolorbox}
\usepackage{amssymb}
\usepackage{amsthm}
\usepackage{amsmath}
\usepackage{mathtools}
\usepackage{nicefrac}
\usepackage{algorithm}
\usepackage{algpseudocode} % (algorithmicx)
\usepackage{tabularx}
\usepackage[table]{xcolor}

\newcommand{\cmark}{\ding{51}}
\usepackage{array}
\usepackage[hidelinks]{hyperref}

\usepackage[shortlabels]{enumitem}
\usepackage{dsfont}
\usepackage{url}
\usepackage{float}
\usepackage{pifont}
\usepackage{hhline}
\usepackage{multirow}
\usepackage{graphicx,wrapfig}
\usepackage{xcolor} % Include the xcolor package

\newcommand{\xmark}{\ding{55}}

\def\smskip{\smallskip}

\def\texitem#1{\par\smskip\noindent\hangindent 25pt
               \hbox to 25pt {\hss #1 ~}\ignorespaces}

\newcommand{\BEAS}{\begin{eqnarray*}}
\newcommand{\EEAS}{\end{eqnarray*}}
\newcommand{\BEA}{\begin{eqnarray}}
\newcommand{\EEA}{\end{eqnarray}}
\newcommand{\BEQ}{\begin{eqnarray}}
\newcommand{\EEQ}{\end{eqnarray}}
\newcommand{\BIT}{\begin{itemize}}
\newcommand{\EIT}{\end{itemize}}
\newcommand{\BNUM}{\begin{enumerate}}
\newcommand{\ENUM}{\end{enumerate}}

\newcommand{\BA}{\begin{array}}
\newcommand{\EA}{\end{array}}

\DeclareMathOperator*{\argmin}{\arg\!\min}

\newif\ifpagenumbering
\pagenumberingtrue

\pagenumberingfalse

\newtheorem{assumption}{Assumption}
\newtheorem{definition}{Definition}
\newtheorem{theorem}{Theorem}
\newtheorem{lemma}[theorem]{Lemma}
\newtheorem{proposition}[theorem]{Proposition}

\theoremstyle{remark}
\newtheorem{remark}{Remark}

\usepackage[numbers,square]{natbib}

\numberwithin{assumption}{section}
\numberwithin{definition}{section}
\numberwithin{theorem}{section}
\numberwithin{remark}{section}
\newcommand{\keywords}[1]{\textbf{Keywords:} #1}
\usepackage{hyperref}
\hypersetup{colorlinks,citecolor=blue,linktocpage,breaklinks=true}
\begin{document}
\title{ On the Analysis of Misspecified Variational
Inequalities with Nonlinear Constraints}

\author{
\parbox{\textwidth}{
\centering
Novel Kumar Dey\thanks{Department of Applied Mathematics, The University of Arizona, Tucson, AZ, USA.}
\quad
Mohammad Mahdi Ahmadi\thanks{Department of Systems and Industrial Engineering, The University of Arizona, Tucson, AZ, USA.\\ 
Emails: \texttt{\{ndey, ahmadi, erfany, afrooz\}@arizona.edu}}
\quad
Erfan Yazdandoost Hamedani$^{\thefootnote}$
\quad\\
Afrooz Jalilzadeh$^{\thefootnote}$
}
}

%\author{Novel Kumar Dey \thanks{Department of Applied Mathematics, The University of Arizona, Tucson, AZ, USA.\\} Mohammad Mahdi Ahmadi\thanks{Department of Systems and Industrial Engineering, The University of Arizona, Tucson, AZ, USA.\\ \qquad\{ndey, ahmadi, erfany, afrooz\}@arizona.edu}
%\quad Erfan Yazdandoost Hamedani$^*$ \quad Afrooz Jalilzadeh$^*$
%\vspace{5mm}
%}
\date{} % optional: removes date
\maketitle
\begin{abstract}
In this paper, we study a class of misspecified variational inequalities (VIs) where both the monotone operator and nonlinear convex constraints depend on an unknown parameter learned via a secondary VI. Existing data-driven VI methods typically follow a decoupled learn-then-optimize scheme, causing the approximation error from the learning to propagate the main decision-making problem and hinder convergence. We instead consider a simultaneous approach that jointly solves the main and secondary VIs. To efficiently handle nonlinear constraints with parameter misspecification, we propose a single-loop inexact Augmented Lagrangian method that simultaneously updates the primal decision variables, dual multipliers, and the misspecified parameter. The method combines a forward-reflected-backward step with an Augmented Lagrangian penalty, and explicitly handles misspecification on both the operator and constraint functions. Moreover, we introduce a relaxed performance metric based on the Minty VI gap combined with an aggregated infeasibility metric. By proving boundedness of the dual iterates, we establish $\mathcal{O}(1/K)$ ergodic convergence rates for these metrics. Numerical Experiments are provided to showcase the superior performance of our algorithm compared to
state-of-the-art methods. 
\end{abstract}

\keywords{Variational inequalities, Augmented Lagrangian method, Misspecified and data-driven optimization}

\section{Introduction}\label{sec:Intro}
Variational inequality (VI) problems have received significant attention in recent years owing to their general formulation that unifies a wide spectrum of well-known problems, including constrained and unconstrained optimization, saddle point (SP) problems, and Nash equilibrium problems \citep{facchinei2003finite}. These formulations naturally arise in diverse domains such as machine learning, economics, and game theory. Classical formulations assume complete knowledge of the problem parameters and feasible sets, enabling deterministic analysis of equilibria and their stability properties. However, in many practical applications, ranging from networked systems and energy markets to learning-based decision models, the underlying parameters governing the VI are not known precisely and must instead be inferred from data or auxiliary learning processes. This leads to \emph{misspecified variational inequalities}, where the true problem data depend on an unknown parameter vector that is estimated imperfectly. Such misspecification introduces an inherent coupling between the estimation and optimization components, giving rise to new analytical and computational challenges.

In contrast to stochastic or robust optimization approaches \citep{jiang2008stochastic,nemirovski2009robust,juditsky2011solving,koshal2012regularized} that assume access to distributional information or uncertainty sets, misspecified VIs capture settings in which the unknown parameters evolve through a separate, possibly \emph{data-driven}, mechanism. This perspective bridges classical equilibrium modeling with modern learning paradigms, allowing for adaptive decision-making under uncertainty without explicit probabilistic assumptions. While recent advances have established foundational results for misspecified VIs \citet{ahmadi2020resolution,jiang2016solution} in unconstrained or affine settings, the incorporation of nonlinear constraints remains largely unexplored.

In this paper, we address a class of \emph{misspecified constrained variational inequality} in which both the operator and the constraint functions depend on an unknown parameter vector $\theta^*$. Specifically, we seek $(x^*,\theta^*)$ such that
\begin{align}\label{eq:PVI}\tag{Misspecified VI}
F(x^*, \theta^*)^\top (y - x^*) \geq 0, \quad \forall y \in \mathcal{X}(\theta^*),
\end{align}
where $F : \mathbb{R}^n \times \mathbb{R}^m \to \mathbb{R}^n$ is a continuous mapping, and $\mathcal{X} \subseteq \mathbb{R}^n$ is the feasible set defined by
\begin{align}\label{eq2}
\mathcal{X}(\theta^*) \triangleq \left\{ x \in X \mid f_j(x,\theta^*) \leq 0, \quad \forall j = 1, \dotsc, J \right\},\quad \textit{ and }\quad X \triangleq \prod_{i=1}^{N} X_{i}.
\end{align}
Each $X_i \subseteq \mathbb{R}^{n_i}$ is a nonempty, closed, and convex set, and $f_j(\cdot,\theta^*)$  is convex in $x$. The parameter $\theta^* \in \Theta \subseteq \mathbb{R}^m$ is not known a priori; instead, it is determined as the solution of a secondary variational inequality
\begin{align}\label{eq:SVI}
H(\theta^*)^\top (\vartheta - \theta^*) \geq 0, \quad \forall \vartheta \in \Theta,
\end{align}
where $H : \mathbb{R}^m \to \mathbb{R}^m$ is a continuous and strongly monotone mapping, and $\Theta \subseteq \mathbb{R}^m$ is a nonempty, closed, and convex set. This coupled VI-within-VI framework simulates, for instance, network or market equilibria where the feasible set and operator are contingent upon demand or cost parameters derived from historical data, or policy-constrained Cournot competition with unobserved prices collected from empirical observations. Our focus is on algorithmic frameworks that \emph{simultaneously} update the primal variable $x$, the dual multipliers associated with the nonlinear constraints, and the parameter estimate $\theta$. To address the challenges posed by nonlinear constraints in the misspecified VI framework, we employ an augmented Lagrangian approach \citep{alizadeh2024randomized}. This method embeds the nonlinear constraints into the objective through a Lagrange multiplier term combined with a quadratic penalty, thereby enforcing feasibility progressively.

\subsection{Literature Review}
A wide body of work has examined VI problems, including foundational results for deterministic VIs \citep{tseng2000modified, nemirovski2004prox, solodov1996modified,solodov2000inexact} and their stochastic counterparts \citep{juditsky2011solving,nemirovski2009robust, lan2021mirror}. In this section, we review prevailing methodologies for optimization problems involving unknown or misspecified parameters, studied across frameworks such as constrained minimization, saddle point formulations, and VI models.

\textbf{Constraint-based Minimization.} In this line of research, numerous works focus on objective function minimization under both deterministic 
\citep{ahmadi2016rate,ahmadi2014data,aybat2021analysis} and stochastic frameworks 
\citep{jiang2016solution,yang2025analysis,yang2025data}. In the deterministic setting, \cite{ahmadi2014data} study a misspecified convex
optimization problem of the form $\min_{x \in X} f(x;\theta^*)$ subject to 
$\theta^* \in \arg\min_{\theta \in \Theta} g(\theta)$. When both the optimization
and learning objectives are strongly convex, the coupled update scheme achieves
a linear convergence rate. In contrast, when $f(\cdot;\theta^*)$ is merely convex
while $g$ remains strongly convex and a weak–sharpness condition is satisfied,
the standard $\mathcal{O}(1/K)$ rate is retained.
%but includes an additive biasterm proportional to the initial misspecification $\|\theta_0 - \theta^*\|$. 
\cite{aybat2021analysis} consider a convex optimization problem with a constraint set that depends on an unknown parameter and introduce an inexact
parametric augmented Lagrangian method (IPALM) to handle this misspecification.
The scheme employs an accelerated proximal-gradient step for updating the optimization decision variable 
and an outer update of the multipliers. They derive iteration-complexity bounds
showing that, under constant and increasing penalty sequences, IPALM requires
at most $\mathcal{O}(K^{-1/4})$ and $\mathcal{O}(\log K/K)$, respectively.

\textbf{Saddle Point (SP) and Variational Inequality (VI) Problems.} Recent research has extended the study of misspecification beyond classical optimization to SP and VI problems.  In \citep{ahmadi2020resolution}, a class of misspecified monotone VI problems is studied, where misspecifications may arise in the objective. To tackle these issues, the authors develop extragradient and regularized first-order algorithms tailored for misspecified monotone VI problems coupled with strongly convex learning subproblems, thereby guaranteeing stable convergence. Complementary to this, \citep{jiang2016solution} analyze stochastic variational inequality (SVI) problems in the context of misspecification, covering both stochastic convex optimization and a range of stochastic equilibrium models. Their analysis 
critically relies on the strong convexity of the learning problem, which guarantees convergence of both the optimization and 
learning iterates. They further establish almost-sure convergence in strongly monotone settings, achieving the optimal rate of 
$\mathcal{O}(1/K)$, and in merely monotone settings by employing iterative Tikhonov regularization, which yields a rate of 
$\mathcal{O}(\sqrt{\log K}/\sqrt{K})$ under weak sharpness. \cite{ahmadi2020resolution} extend their earlier work \cite{ahmadi2014data} to
misspecified monotone variational inequality problems and show that a
constant–steplength misspecified extragradient scheme achieves asymptotic
convergence of the iterates. More recently, \citet{ahmadi2025simultaneous} examine a class of misspecified convex-concave SP problems where the misspecified parameter can be obtained through a secondary SP problem with a strongly convex-concave objective function. They introduce a Learning-aware Accelerated Primal-Dual method that integrates the parameter updates directly into the dual momentum step and utilizes an adaptive backtracking scheme to select step sizes adaptively. Their approach attains a convergence rate of $\mathcal{O}(\log K / K)$ without relying on compactness of the feasible sets. In addition, they extend their method to settings where the secondary SP problem may not have a unique solution, proving that a modified version of the algorithm tailored to this structure achieves a rate of $\mathcal{O}(1/\sqrt{K})$ in terms of both gap function and learning suboptimality.

\subsection{Research Gap and Our Contribution}
Previous work on misspecified VIs has primarily considered settings where the constraint set is easy to project onto and does not depend on any unknown parameters. In contrast, this paper focuses on a broader class of misspecified VIs that involve misspecified nonlinear constraints in the primary problem. A detailed comparison between our method and existing approaches is provided in Table~\ref{tab:misspec_table}. 

More specifically, we study a coupled VI framework in which the primary VI \eqref{eq:PVI} is defined over a parameter-dependent feasible region $\mathcal{X}(\theta)$, specified by a set of convex nonlinear functions. The unknown parameter $\theta^*$ is simultaneously estimated by solving a secondary VI problem \eqref{eq:SVI}, which we assume to be strongly monotone. To address this problem, we propose a novel single-loop method that combines augmented Lagrangian and forward-reflected-backward step. To contend with the challenge of misspecified constraints, we show that under suitable assumptions, the corresponding dual iterates remain bounded, which helps us to establish the convergence rate of the proposed method. Furthermore, to measure the quality of the solution, we introduce a \emph{relaxed Minty VI gap} as well as \emph{infeasibility} metrics and establish an ergodic convergence rate of $\mathcal O(1/K)$ in a monotone setting, matching the optimal first-order convergence rate for fully specified monotone VIs \citep{juditsky2011solving,nemirovski2009robust}.

\begin{table}[H]
\centering
\caption{Comparison of related misspecified convex optimization/monotone VI studies.}
\label{tab:misspec_table}
\footnotesize
\setlength{\tabcolsep}{8pt}     % smaller spacing since columns expand automatically
\renewcommand{\arraystretch}{1}

\begin{tabularx}{\textwidth}{
 | >{\centering\arraybackslash}c
 | >{\centering\arraybackslash}X
 | >{\centering\arraybackslash}X
 | >{\centering\arraybackslash}X
 | >{\centering\arraybackslash}X
 | >{\centering\arraybackslash}X |
}
\hline
\rowcolor{gray!15}
References & Primary problem &  Non-linear const. & Misspecif. in const. & Secondary problem &Convergence rate \\
\hline
\citet{ahmadi2020resolution} & VI &  \xmark & \xmark & VI &- \\
\hline
\citet{aybat2021analysis} & Optimization & \cmark & \cmark & Optimization & $\mathcal{O}(\frac{\log K}{K})$ \\
\hline
\citet{jiang2016solution} & Stochastic VI &\xmark & \xmark & Stochastic VI & $\mathcal{O}\left(\frac{\sqrt{\log K}}{\sqrt{K}}\right)$ \\
\hline
\citet{ahmadi2025simultaneous} & Saddle Point &  \xmark & \xmark & Saddle Point &$\mathcal{O}(\frac{\log K}{K})$ \\
\hline
\textbf{This work} & \textbf{VI}  & \textbf{\cmark} & \textbf{\cmark} & \textbf{VI}&$\mathbf{\mathcal{O}(\frac{1}{K})}$\\
\hline
\end{tabularx}
\end{table}

\subsection{Motivating Example}\label{sec:motivating-example}
The Misspecified constrained VIs in problem \eqref{eq:PVI} have broad applications in markets and networked systems where feasibility depends on misspecified parameters learned from data. Some of these examples include constrained Nash-Cournot competition characterized by demand misspecification and policy cap \cite{jiang2011learning}, inverse and data-driven equilibrium in \cite{thai2018imputing}, misspecified Markowitz portfolio optimization problem in \cite{aybat2021analysis}, and minimax schemes with expectation constraints \cite{yang_Xu2025data}. Next, we provide a Cournot example in the following motivating example.

\paragraph{Cournot with a price-function constraint and unknown slope:} Modern commodity markets often operate under policy rules that constrain prices while firms face uncertainty about demand responsiveness. Consider a homogeneous goods market with $N$ firms, $i=1,\dots,N$. Firm $i$ chooses a quantity $x_i\ge0$; let the aggregate output be $X\triangleq\sum_{i=1}^N x_i$. The market clears at a linear inverse demand $p(X; b^*) = a^* - b^* X,$
where $a^*>0$ is known and $b^*>0$ is the unknown slope capturing how aggressively price falls with output. Firms know costs and capacities but do not know the true $b^*$. Beyond firm-level capacity limits $x_i\in[0,\mathrm{Cap}_i]$, many markets also face a price-function constraint induced by policy, such as an administrative price cap. We encode this as $p(X;b^*) \le a^*-\delta$, for some $\delta>0$. Mathematically, it imposes a lower bound on the feasible output that depends on the (unknown) slope $b^*$. Let $c_i:[0,\mathrm{Cap}_i]\to\mathbb{R}_+$ be firm $i$'s convex cost. Then, each firm's objective is to maximize the profit $\pi_i({\bf x}) \triangleq p(X; b^*)x_i - c_i(x_i)$ such that $x_i\in [0,\mathrm{Cap}_i]$ and $p(X;b^*) \le a^*-\delta$. In practice, the firms observe sample pairs $\{(p_{\text{obs},t}, X_t)\}_{t=1}^T$ and must learn $b^*$ from data while computing equilibria that respect the price constraint.

\paragraph{Formulation as a Misspecified Variational Inequality:} We cast the problem as finding a pair $(x^*,b^*)$ in the form of \eqref{eq:PVI} where the secondary VI \eqref{eq:SVI} represents learning parameter $b^*$ from $\{(p_{\text{obs},t},X_t)\}_{t=1}^T$. For instance, one can use least-squares regression to learn the unknown parameter $b^*$ as follows: 
\begin{align}\label{eq:LS}
    b^*= \argmin_{b\geq 0}\
\mathcal {L}(b)= \frac12\sum_{t=1}^T\big(p_{\text{obs},t}-(a^*-bX_t)\big)^2.
\end{align}
Moreover, the feasible set in \eqref{eq:PVI} can be characterized as $\displaystyle \mathcal X(b^*) \triangleq \Bigl\{ \mathbf{x}=[x_i]_{i=1}^N\in\prod_{i=1}^{N}[0,\text{Cap}_i] \Big| p(X;b^*) \le a^*-\delta \Bigr\}$. Then, the operator $F(\mathbf{x}, b)$ is defined by concatenating the gradients of the negative profit functions of the firms $F_i(\mathbf{x},b)\triangleq c_i'(x_i)+ b(X+x_i)- a^*$.

\paragraph{Outline of the paper:}
The remainder of the paper is organized as follows. In Section \ref{sec:prelim}, we introduce notations and state the assumptions. Section \ref{sec:algorithm} presents the proposed algorithm along with augmented-Lagrangian formulation of the misspecified variational inequality. In Section \ref{sec:theory}, we establish convergence theory for the proposed method and derive the explicit convergence rate. Section \ref{sec:numerics} implements a misspecified Cournot equilibrium and empirically compares the proposed scheme against two benchmark methods. Detailed proofs of the lemmas and the theorem are provided in the Appendix.

\section{Preliminaries and Assumptions}\label{sec:prelim}
\paragraph{Notation.} Throughout the paper, $\|\cdot\|$ denotes the Euclidean norm, and for any scalar $x \in \mathbb{R}$ we use $[x]_{+} \triangleq \max\{x,0\}$. For a differentiable vector-valued function $f : \mathbb{R}^n \to \mathbb{R}^m$, we write $\mathbf{J}f : \mathbb{R}^n \to \mathbb{R}^{m \times n}$ for its Jacobian matrix. For a closed convex set $X$ the projection operator is denoted by $\Pi_{X}(x)\triangleq \argmin_{y\in X}\|x-y\|$.

We make the following standard assumptions regarding \ref{eq:PVI} problem.
\begin{assumption}\label{assum:assum1}
The following hold:
\begin{enumerate}
    \item[(i)] For any $\theta \in \Theta$, the mapping $F(\cdot, \theta) : \mathbb{R}^n \to \mathbb{R}^n$ is continuous and monotone on $X$, i.e.,
    \[
    \left( F(x, \theta) - F(y, \theta) \right)^\top (x - y) \geq 0, \quad \forall x,y\in X.
    \]
    \item[(ii)] The mapping $H : \mathbb{R}^m \to \mathbb{R}^m$ is Lipschitz continuous and strongly monotone on $\Theta$, i.e.,
    \[
    \|H(\theta) - H(\vartheta)\|\leq L_H\|\theta-\vartheta\|,\quad 
    \left( H(\theta) - H(\vartheta) \right)^\top (\theta - \vartheta) \geq \mu_H\|\theta-\vartheta\|,\quad \forall \theta,\vartheta\in\Theta.
    \]
    \item [(iii)] For any $j\in\{1,\hdots,J\}$ and $\theta \in \Theta$, the function $f_j(\cdot,\theta)$ is convex and continuously differentiable on $X$. Moreover, there exists $L_{\nabla f,x}\ge 0$ such that for all $x,y\in X$ and all $\theta\in\Theta$,
    \begin{align*}
        \|\nabla_x f_j(x,\theta)-\nabla_x f_j(y,\theta)\| \leq L_{\nabla f,x}\|x-y\|.
    \end{align*}
    In addition, for any $\theta \in \Theta$ and $x\in X$, $f_j(\cdot, \theta)$ and $f_j(x,\cdot)$ are Lipschitz continuous with constants $L_{f,x}\ge 0$ and $L_{f,\theta}\ge 0$ respectively.
    \item[(iv)] The mapping $F: \mathbb{R}^n\times \mathbb{R}^m \to \mathbb{R}^n$ is Lipschitz continuous in $x$ and $\theta$ over $X \times \Theta$; that is, there exist constants $L_{F,x} \geq 0$ and $L_{F,\theta} \geq  0$ such that:
    \begin{align*}
    & \| F(x_1, \theta) - F(x_2, \theta) \| \leq L_{F,x} \| x_1 - x_2 \|, \quad \forall x_1, x_2 \in X, \quad \forall \theta \in \Theta,\\
    & \| F(x, \theta_1) - F(x, \theta_2) \| \leq L_{F,\theta} \| \theta_1 - \theta_2 \|, \quad \forall x \in X, \quad \forall \theta_1, \theta_2 \in \Theta.
    \end{align*} 
    \item[(v)] The sets $X \subseteq \mathbb{R}^n$ and $\Theta \subseteq \mathbb{R}^m$ are nonempty, closed, convex and compact.
    \item [(vi)] (Slater's Condition) There exists $\hat{x} \in X$ such that: $\displaystyle
    f_j(\hat{x}, \theta^*) < 0$, for all $j \in\{ 1, \dotsc, J\}$. 
\end{enumerate}
\end{assumption}

\begin{definition}\label{def:def_aug_lag}(Augmented-Lagrangian Function for Misspecified VI) For $x, y \in \mathbb{R}^n$, $\lambda \in \mathbb{R}^J$, $\theta \in \mathbb{R}^m$, and $\rho > 0$, the Augmented-Lagrangian function for the \ref{eq:PVI} problem is defined as
\[
\mathcal{L}_\rho(x, y, \theta, \lambda) \triangleq F(y, \theta)^\top (x - y) + \Phi_\rho(x, \lambda, \theta),
\]
where $\Phi_\rho(x,\lambda,\theta) \triangleq  \sum_{j=1}^J \phi_\rho(f_j(x, \theta), \lambda_j)$ and $\phi_\rho(u, v) \triangleq
\begin{cases}
uv + \frac{\rho}{2} u^2, & \text{if } \rho u + v \geq 0, \\
-\frac{v^2}{2\rho}, & \text{otherwise.}
\end{cases}$
\end{definition}
Based on the above definition, one can observe that 
\begin{equation}\label{rem:rem6_prop2}
    \nabla_x \Phi_{\rho_k}(x_k, \lambda_k, \theta^*) = \sum_{j=1}^J \left[ \rho_k f_j(x_k, \theta^*) + \lambda_k^{(j)} \right]_+ \nabla f_j(x_k, \theta^*).
\end{equation}

\begin{proposition}\label{prop1}(KKT Conditions) Suppose that a solution pair $(x^*,\theta^*)$ of (\ref{eq:PVI}) exists and Assumption \ref{assum:assum1} holds. Let $f(x, \theta^*) \triangleq [f_1(x, \theta^*), \dots, f_J(x, \theta^*)]^\top$, and the Jacobian matrix $\nabla f(x, \theta^*) \triangleq [\nabla f_1(x, \theta^*), \dots, \nabla f_J(x, \theta^*)]^\top \in \mathbb{R}^{n \times J }$. There exists a pair $(x^*, \theta^*) \in \mathbb{R}^n \times \mathbb{R}^m$ and $\lambda^* \in \mathbb{R}^J$ satisfying the following KKT conditions:
\begin{itemize}
    \item[(i)] Stationarity:  $0 \in F(x^*, \theta^*) +  \nabla f(x^*, \theta^*)^\top \lambda^* + \mathcal{N}_X(x^*) \quad \textit{ and } \quad 0 \in H(\theta^*) + \mathcal{N}_\Theta(\theta^*),$
where $\mathcal{N}_X(x^*)$, and $\mathcal{N}_\Theta(\theta^*)$ are the normal cones of sets $X$ and $\Theta$ at $x^*$ and $\theta^*$, respectively.
\item[(ii)] Complementary Slackness:  $0 \leq \lambda^* \perp -f(x^*, \theta^*) \geq 0.$
\item[(iii)] Feasibility:  $x^* \in \mathcal{X}(\theta^*) \triangleq \{x \in X \mid f_j(x, \theta^*) \leq 0, \quad \forall j = 1, \dotsc, J\}, \quad \textit{and }\quad \theta^* \in \Theta.$
\end{itemize}
\end{proposition}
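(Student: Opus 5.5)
The plan is to obtain the $\theta$-part and the $x$-part separately, since the secondary VI \eqref{eq:SVI} does not involve $x$.

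\textbf{The $\theta$-part.} The second stationarity condition $0\in H(\theta^*)+\mathcal N_\Theta(\theta^*)$ is a restatement of \eqref{eq:SVI}. By the definition of the normal cone of the closed convex set $\Theta$, $-H(\theta^*)\in\mathcal N_\Theta(\theta^*)$ means $H(\theta^*)^\top(\vartheta-\theta^*)\ge 0$ for all $\vartheta\in\Theta$. Moreover, existence and uniqueness of $\theta^*$ follow from Assumption \ref{assum:assum1}(ii),(v): $H$ is continuous and strongly monotone on the compact convex set $\Theta$. This is also why the parameter in the rest of the statement is well defined.

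\textbf{The $x$-part.} Fix $\theta^*$ and use the given solution $x^*$ of \eqref{eq:PVI}.
\begin{enumerate}
\item Feasibility (iii) is immediate, since $x^*\in\mathcal X(\theta^*)$.
\item Observe that $x^*$ solves the convex program
\[
\min_{y\in X}\; F(x^*,\theta^*)^\top y \quad\text{s.t.}\quad f_j(y,\theta^*)\le 0,\ j=1,\dots,J.
\]
Its objective is linear, each $f_j(\cdot,\theta^*)$ is convex and $C^1$ by Assumption \ref{assum:assum1}(iii), and $X$ is closed and convex. The VI inequality says exactly that $x^*$ attains the minimum.
\item Slater's condition, Assumption \ref{assum:assum1}(vi), gives a point $\hat x\in X$ with $f_j(\hat x,\theta^*)<0$ for every $j$. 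Convex Lagrangian duality then yields a multiplier $\lambda^*\ge 0$ with zero duality gap. Concretely, one separates the convex set $\{(F(x^*,\theta^*)^\top y - F(x^*,\theta^*)^\top x^* + s_0,\ f(y,\theta^*)+s):\ y\in X,\ s_0\ge 0,\ s\ge 0\}$ from the open negative orthant in $\mathbb R^{1+J}$. The Slater point forces the multiplier on the objective component to be nonzero, and normalizing it gives $\lambda^*$.
\item Zero duality gap means $x^*$ minimizes $\ell(y)\triangleq F(x^*,\theta^*)^\top y+\lambda^{*\top}f(y,\theta^*)$ over $X$. It also gives $\lambda^{*\top}f(x^*,\theta^*)=0$, which is complementary slackness (ii).
\item The function $\ell$ is convex and differentiable, so the first-order optimality condition over $X$ reads
\[
0\in F(x^*,\theta^*)+\nabla f(x^*,\theta^*)^\top\lambda^*+\mathcal N_X(x^*),
\]
which is the first stationarity condition in (i). Here I use the natural orientation of the Jacobian, so that the multiplier term is $\sum_j\lambda_j^*\nabla_x f_j$.
\end{enumerate}

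\textbf{Main obstacle.} The delicate step is the constraint qualification in step 3. One must make sure the separating hyperplane does not degenerate, i.e., the multiplier on the objective is positive. Plugging in the Slater point $\hat x$ excludes a zero objective multiplier, because the nonzero remaining multipliers would give a contradiction $\sum_j \mu_j f_j(\hat x,\theta^*)<0\le\dots$. Compactness of $X$ plays no role in this argument; it only guarantees that solutions exist if one wishes to add that.
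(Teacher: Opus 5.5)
Your proposal is correct and follows the same route as the paper. You observe that $x^*$ solves the convex program with linear objective $F(x^*,\theta^*)^\top y$ over $\mathcal{X}(\theta^*)$, use Slater's condition to obtain $\lambda^*$, and read the $\theta$-condition directly off the secondary VI; the only difference is that you spell out the separation argument behind the multiplier's existence, which the paper simply cites as the standard KKT theorem under Slater's condition.
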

\proof{Proof.} 
Note that any solution pair $(x^*,\theta^*)$ of \eqref{eq:PVI} is a solution of the following optimization problem
\begin{equation}\label{eq:KKT}
\min _{y \in X} y^{T} F(x^*,\theta^*) \quad\text { s.t. } \quad f(y,\theta^*) \leq \mathbf{0}.
\end{equation}
Since the Slater condition holds, the first-order KKT condition of (\ref{eq:KKT}) implies that there exists $\lambda^{*} \in \mathbb{R}^{J}$ satisfying conditions (i)-(iii). Moreover, the optimality condition of the secondary VI problem \eqref{eq:SVI} immediately implies that $0\in H(\theta^*)+\mathcal N_\Theta(\theta^*)$.
\endproof

\begin{lemma}\label{lemma:lem1}
    Consider \eqref{eq:PVI} problem under Assumption \ref{assum:assum1} and let $(x^*,\theta^*,\lambda^*)$ satisfy the KKT
conditions of Proposition \ref{prop1}. Then for all $x\in \mathcal{X(\theta^*)}$, $(x-x^*)^\top F(x^*,\theta^*) +  \sum_{j=1}^{J} \lambda^{*(j)} f_j(x,\theta^*)\ge 0$. 
    % \begin{align*}
    %     (x-x^*)^\top F(x^*,\theta^*) +  \sum_{j \in J} \lambda^{*(j)} f_j(x,\theta^*\ge 0, \qquad \text{ for all } \quad x\in \mathcal{X}(\theta^*).
    % \end{align*}
    \begin{proof}{Proof}
        The result follows directly from Lemma 1 in \cite{alizadeh2024randomized}, which establishes the corresponding inequality under Assumption \ref{assum:assum1}. 
    \end{proof}
\end{lemma}

\section{Proposed Method}\label{sec:algorithm}
In this section, we present an augmented Lagrangian method for solving \eqref{eq:PVI}. Due to the presence of a nonlinear misspecified constraint, we introduce a multiplier $\lambda$ to relax this constraint, following Definition \ref{def:def_aug_lag}. Moreover, to handle the parameter misspecification, we simultaneously solve a secondary VI problem that generates a sequence of iterates converging to the unique optimal parameter $\theta^*$. The approximation of $\theta^*$ obtained at each iteration is then used to update both the main decision variable $x$ and the multiplier $\lambda$. To evaluate the quality of the obtained solution, we introduce a \emph{relaxed gap function}. First, recall that the standard VI gap function for \eqref{eq:PVI} is defined as $\text{Gap}(x, \theta^*) \triangleq \sup_{y \in \mathcal{X}(\theta^*)} { F(y, \theta^*)^\top (x - y) }, $ and an $\epsilon$-approximate solution satisfies $\text{Gap}(x, \theta^*) \leq \epsilon$. This gap function is a valid optimality metric, since $\text{Gap}(x, \theta^*) \geq 0$ for any feasible $x \in \mathcal{X}(\theta^*)$, and $\text{Gap}(x, \theta^*) = 0$ implies that $x$ is a solution to \eqref{eq:PVI}. However, because our proposed method employs a dual multiplier to enforce constraints and accounts for the misspecified parameter $\theta$, the generated sequence may be infeasible. Consequently, the algorithm’s output $\bar{x}$ may not belong to the feasible set $\mathcal{X}(\theta^*)$, and we might have $\text{Gap}(\bar{x}, \theta^*) < 0$. To address this issue, we measure both (i) the level of \emph{infeasibility}, quantified as $\mathbf{1}^\top [f(x, \theta^*)]_{+} \leq \epsilon$, and (ii) a \emph{relaxed gap function} defined as  $\widetilde{\text{Gap}}(x, \theta^*) \triangleq \sup_{y \in \mathcal{X}_\epsilon(\theta^*)} { F(y, \theta^*)^\top (x - y) }$ where $\mathcal{X}_\epsilon(\theta^*) \triangleq \{ x \in X \mid \mathbf{1}^\top [f(x, \theta^*)]_{+} \leq \epsilon \}$ denotes the $\epsilon$-enlargement of the feasible set. It is straightforward to verify that $\widetilde{\text{Gap}}(x, \theta^*) \geq 0$ for any $x \in \mathcal{X}_\epsilon(\theta^*)$, and $\widetilde{\text{Gap}}(x, \theta^*) \geq \text{Gap}(x, \theta^*)$ for any $x$. Therefore, we define an $\epsilon$-approximate solution $\bar{x} \in X$ if 
\begin{equation*}
    \text{(Relaxed gap): } \widetilde{\text{Gap}}(\bar x, \theta^*)\leq \epsilon,\qquad \text{(Infasibility): } \mathbf{1}^\top [f(\bar x,\theta^*)]_{+}\leq \epsilon.
\end{equation*} 
Furthermore, as $\epsilon \to 0$, the point $\bar{x}$ becomes feasible, i.e., $\bar{x} \in \mathcal{X}(\theta^*)$, which implies that $\widetilde{\text{Gap}}(x, \theta^*) = \text{Gap}(x, \theta^*) = 0$, hence, $\bar{x}$ is a solution to \eqref{eq:PVI}.

To find such an approximate solution, we propose a novel single-loop algorithm by combining the forward-reflected-backward method with an augmented Lagrangian update. This combination enables us to effectively handle nonlinear constraints under misspecified evaluations of both the operator $F$ and the nonlinear constraint functions $f_j$’s. 
Recall the augmented Lagrangian function from Definition \ref{def:def_aug_lag}, then for fixed $(\theta,\lambda)$, the term $\Phi_\rho(x,\lambda,\theta)$ imposes penalization for violating the constraints $f_j(x,\theta)\leq 0$. In particular, at each iteration $k\geq 0$, the decision variable $x$ is updated using a projected forward operation applied to the augmented Lagrangian function combined with a reflected (correction) step $r_k= F(x_k,\theta_k)-F(x_{k-1},\theta_{k-1})$ as $x_{k+1}=\Pi_X[x_k-\gamma_k (r_k+\nabla_x \mathcal L_{\rho_k}(x_k,\lambda_k,\theta_k))]$. Then, the dual multiplier $\lambda_k$ is updated via a projected gradient ascent step to promote feasibility based on the newly updated primal variable $x_{k+1}$. Finally, the misspecified parameter $\theta_{k+1}$ is refined by performing one projected gradient step associated with the secondary VI problem \eqref{eq:SVI}. The steps of the algorithm are outlined in Algorithm \ref{alg:alm_mvi}. 

%We now present the proposed algorithm below:
\begin{algorithm}[H]
\caption{Augmented Lagrangian Method for Misspecified VI (ALM-Misspecified VI)}\label{alg:alm_mvi}
\begin{algorithmic}[1]
\State \textbf{Input:} $x_0 \in X$, $\theta_0 \in \Theta$, $\{\rho_k,\gamma_k,\eta_k\}_{k\geq 0}\subset \mathbb R_{++}$.
\State \textbf{Initialization:} $x_{-1}\gets x_0$, $\theta_{-1}\gets \theta_0$, $\lambda_0 \gets \mathbf{0}$ 
\For{$k = 0, 1, 2, \dots, K-1$}
    \State $r_k \gets F(x_k,\theta_k)-F(x_{k-1},\theta_{k-1})$
    \State $x_{k+1}
\gets \Pi_{X}\left[ x_k
-\gamma_k \left(
F(x_k,\theta_k)+ r_k + \mathbf{J}f(x_k,\theta_k) [\rho_k f(x_k,\theta_k)+\lambda_k]_+
\right)
\right]$
    \State $\lambda_{k+1} \gets \left[ \lambda_k + \rho_k f(x_{k+1}, \theta_k) \right]_+$
    \State $\theta_{k+1} \gets \Pi_\Theta \left[ \theta_k - \eta_k H(\theta_k) \right]$
\EndFor
\end{algorithmic}
\end{algorithm}

\section{Convergence Analysis}\label{sec:theory}
In this section, we establish the convergence properties of Algorithm \ref{alg:alm_mvi} under Assumption \ref{assum:assum1}. In particular, through a step-by-step analysis, we demonstrate that our method achieves a convergence rate of $\mathcal{O}(1/K)$ in terms of the relaxed gap and infeasibility metrics. All related proofs are provided in the Appendix. 

We begin by showing some preliminary bounds on the difference between the \emph{constraint enforcement term} of the augmented Lagrangian function, $\Phi_{\rho_k}(x_{k+1},\lambda_k,\theta^*)$, and the corresponding term in the standard Lagrangian, $\sum^J_{j=1}\lambda^{(j)}f_j(x_{k+1},\theta^*)$.

\begin{lemma}\label{lemma:lemma2}
Consider Algorithm \ref{alg:alm_mvi}. Let us define $J^+_{k} \triangleq \Big\{ j\in[J] : \rho_k f_j(x_{k+1},\theta_k) + \lambda_k^{(j)} \ge 0 \Big\} \text{ and }
J^-_{k} \triangleq [J]\setminus J^+_{k}$. Then for any $\lambda\in\mathbb{R}^J_+$, the following inequality holds:
\begin{align*}
&-\Phi_{\rho_k}\bigl(x_{k+1},\lambda_k,\theta^*\bigr)
+ \sum_{j=1}^J\lambda^{(j)}f_j(x_{k+1},\theta^*) \leq
\frac{1}{2 \rho_k}\left( \|\lambda_k - \lambda\|^2 - \|\lambda_{k+1}-\lambda\|^2\right) +\frac{J \rho_k}{2} L^2_{f,\theta} \|\theta_k - \theta^*\|^2  \notag\\
& \qquad  + L^\Phi_{\lambda\theta}\|\lambda_{k+1}-\lambda\| \|\theta_k-\theta^*\| + 2L_{f,\theta}(J\rho_k D_f+\sqrt{J}\|\lambda_k\|)\|\theta_k-\theta^*\|.
\end{align*}
\end{lemma}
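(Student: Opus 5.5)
The plan is to first prove the inequality with $\theta_k$ in place of $\theta^*$ in both $\Phi_{\rho_k}$ and $\sum_j\lambda^{(j)}f_j$, using the standard augmented Lagrangian identity for the multiplier update. Afterwards we pay for swapping $\theta_k$ back to $\theta^*$ using the Lipschitz continuity of $f_j(x,\cdot)$ from Assumption \ref{assum:assum1}(iii). Throughout, $D_f$ denotes a bound on $|f_j(x,\theta)|$ over the compact set $X\times\Theta$, which exists by Assumption \ref{assum:assum1}(v) and continuity. $L^\Phi_{\lambda\theta}$ is a constant, to be identified along the way, of order $\sqrt{J}L_{f,\theta}$.

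\textbf{Step 1 (exact-parameter identity).} Write $u_j\triangleq f_j(x_{k+1},\theta_k)$, so that $\lambda_{k+1}^{(j)}=[\lambda_k^{(j)}+\rho_k u_j]_+$. The argument is coordinatewise, using the sets $J_k^\pm$.
\begin{itemize}
\item For $j\in J_k^+$: we have $\lambda_{k+1}^{(j)}-\lambda_k^{(j)}=\rho_k u_j$ and $\phi_{\rho_k}(u_j,\lambda_k^{(j)})=u_j\lambda_k^{(j)}+\frac{\rho_k}{2}u_j^2$. Expanding $\|\lambda_k^{(j)}-\lambda^{(j)}\|^2-\|\lambda_{k+1}^{(j)}-\lambda^{(j)}\|^2$ shows that $-\phi_{\rho_k}+\lambda^{(j)}u_j$ equals exactly $\frac{1}{2\rho_k}\big((\lambda_k^{(j)}-\lambda^{(j)})^2-(\lambda_{k+1}^{(j)}-\lambda^{(j)})^2\big)$.
\item For $j\in J_k^-$: we have $\lambda_{k+1}^{(j)}=0$ and $\rho_k u_j<-\lambda_k^{(j)}$. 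Since $\lambda^{(j)}\ge0$, this gives $\lambda^{(j)}u_j\le-\lambda^{(j)}\lambda_k^{(j)}/\rho_k$. Combined with $-\phi_{\rho_k}=(\lambda_k^{(j)})^2/(2\rho_k)$, the same right-hand side is obtained as an upper bound.
\end{itemize}
Summing over $j$ yields
\[-\Phi_{\rho_k}(x_{k+1},\lambda_k,\theta_k)+\sum_j\lambda^{(j)}f_j(x_{k+1},\theta_k)\le\tfrac{1}{2\rho_k}\big(\|\lambda_k-\lambda\|^2-\|\lambda_{k+1}-\lambda\|^2\big).\]

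\textbf{Step 2 (parameter swap in $\Phi$).} Set $\delta_j\triangleq f_j(x_{k+1},\theta^*)-f_j(x_{k+1},\theta_k)$, so that $|\delta_j|\le L_{f,\theta}\|\theta_k-\theta^*\|$. We need to bound $\Phi_{\rho_k}(x_{k+1},\lambda_k,\theta_k)-\Phi_{\rho_k}(x_{k+1},\lambda_k,\theta^*)$.
\begin{itemize}
\item $\phi_\rho(\cdot,v)$ is $C^1$ in $u$ with derivative $[\rho u+v]_+$.
\item By the mean value theorem, each term is at most $\max_{t}[\rho_k(u_j+t\delta_j)+\lambda_k^{(j)}]_+|\delta_j|$.
\item This is at most $(\rho_k D_f+\lambda_k^{(j)}+\rho_k|\delta_j|)|\delta_j|$.
\end{itemize}
Summing over $j$ with Cauchy--Schwarz, $\sum_j\lambda_k^{(j)}\le\sqrt J\|\lambda_k\|$, gives the terms $L_{f,\theta}(J\rho_kD_f+\sqrt J\|\lambda_k\|)\|\theta_k-\theta^*\|$. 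The quadratic remainder $\rho_k\sum_j\delta_j^2$ is bounded by $J\rho_kL_{f,\theta}^2\|\theta_k-\theta^*\|^2$. To reach the sharper coefficient $\frac{J\rho_k}{2}$ stated in the lemma, use the exact second-order expansion of $\phi_\rho$, which is convex with curvature at most $\rho$ in $u$, instead of the crude mean-value bound.

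\textbf{Step 3 (parameter swap in the linear term).} Write $\lambda=\lambda_{k+1}+(\lambda-\lambda_{k+1})$. Then
\[\sum_j\lambda^{(j)}\delta_j\le\sum_j\lambda_{k+1}^{(j)}|\delta_j|+\sqrt J L_{f,\theta}\|\lambda_{k+1}-\lambda\|\|\theta_k-\theta^*\|.\]
The second piece is the $L^\Phi_{\lambda\theta}$ term. For the first piece, use $\lambda_{k+1}^{(j)}\le\lambda_k^{(j)}+\rho_kD_f$, which contributes another copy of $L_{f,\theta}(J\rho_kD_f+\sqrt J\|\lambda_k\|)\|\theta_k-\theta^*\|$; this accounts for the factor $2$ in the lemma.

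\textbf{Expected obstacle.} The main work is the bookkeeping in Step 2: handling the index sets consistently when $\rho_kf_j+\lambda_k^{(j)}$ changes sign between $\theta_k$ and $\theta^*$, so that the constants match the stated form. Working through the $C^1$ mean-value formulation should avoid having to split into further cases.
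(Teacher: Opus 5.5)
Your proof is correct, and it is organized differently from the paper's.

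The paper's route is as follows. It treats the dual update as a gradient step $\lambda_{k+1}-\lambda_k=\rho_k\nabla_\lambda\Phi_{\rho_k}(x_{k+1},\lambda_k,\theta_k)$ and re-centres it at $\theta^*$ with an error $e_k$, which produces the $L^\Phi_{\lambda\theta}$ term. It then applies the three-point identity and lower-bounds $\|\lambda_{k+1}-\lambda_k\|^2$ via $(b+c)^2\ge\frac12 b^2-c^2$, which produces the $\frac{J\rho_k}{2}L_{f,\theta}^2$ term. The remaining $\theta_k$-versus-$\theta^*$ discrepancies go into a quantity $E_k$ bounded by $2L_{f,\theta}(J\rho_kD_f+\sqrt{J}\|\lambda_k\|)\|\theta_k-\theta^*\|$.

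You instead prove the exact augmented-Lagrangian inequality at $\theta_k$, where $J_k^\pm$ are precisely the sets produced by the update. Only afterwards do you pay for the swap to $\theta^*$, through two separate perturbation bounds, one for $\Phi_{\rho_k}$ and one for the linear term. Each contributes one copy of $L_{f,\theta}(J\rho_kD_f+\sqrt{J}\|\lambda_k\|)\|\theta_k-\theta^*\|$, and $L^\Phi_{\lambda\theta}=\sqrt{J}L_{f,\theta}$ becomes explicit.

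The paper's framing mirrors its primal analysis (a gradient step plus a $\theta$-Lipschitz error). However, because it works at $\theta^*$ from the start, it has to take three steps that are not valid as written, each hiding an $O(\|\theta_k-\theta^*\|)$ discrepancy or a lost factor of $2$:
\begin{itemize}
\item it splits the components of $\nabla_\lambda\Phi_{\rho_k}(x_{k+1},\lambda_k,\theta^*)$ according to the $\theta_k$-defined sets $J_k^\pm$;
\item it writes $\lambda_{k+1}^{(j)}=\lambda_k^{(j)}+\rho_k f_j(x_{k+1},\theta^*)$ on $J_k^+$;
\item it passes from the coefficient $\frac{\rho_k}{4}$ in \eqref{eq:eq_dual_1} to the $\frac{\rho_k}{2}$ required in \eqref{eq:lemma5.3}.
\end{itemize}
Your ordering needs none of these, so it is the cleaner and more reliable route.

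It even gives slightly more. In your Step 2, the point $u_j+t\delta_j$ is a convex combination of $f_j(x_{k+1},\theta_k)$ and $f_j(x_{k+1},\theta^*)$. Hence $[\rho_k(u_j+t\delta_j)+\lambda_k^{(j)}]_+\le\rho_kD_f+\lambda_k^{(j)}$ directly, no quadratic remainder arises, and the term $\frac{J\rho_k}{2}L_{f,\theta}^2\|\theta_k-\theta^*\|^2$ is superfluous. Moreover, only positive parts of the $f_j$ enter your bounds, so they also hold with the paper's $D_f=\sup\|[f]_+\|$.
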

\begin{proof}{Proof}
See Appendix \ref{sec:lem2} for the proof. 
\end{proof}

Based on the result of Lemma \ref{lemma:lemma2}, we next establish a one-step bound for the iterates generated by the proposed algorithm.

\begin{lemma}[Lipschitz continuity of $\nabla_x \Phi_{\rho}(\cdot, \lambda, \theta)$]\label{lemma:lip_grad} Suppose Assumption \ref{assum:assum1} holds. Then, for every $\rho>0$, every $\lambda\in\mathbb R^J$, and every $\theta\in\Theta$, the mapping $\nabla_x\Phi_\rho(\cdot,\lambda,\theta)$ is Lipschitz continuous on $X$. In particular, for any $x,y\in X$,
%\begin{align*}
$    \|\nabla_x \Phi_{\rho}(x,\lambda,\theta)-\nabla_x \Phi_{\rho}(y,\lambda,\theta)\|
\leq
(\rho C_1+\|\lambda\|C_2)\|x-y\|,$ 
%\end{align*}
where $C_1\triangleq \sqrt{J}\big(L_{\nabla f,x}D_f+L_{f,x}M_{\nabla f}\big),
\quad
C_2\triangleq \sqrt{J}\,L_{\nabla f,x},$ and $D_f\triangleq \sup_{(z,\vartheta)\in X\times\Theta}\|[f(z,\vartheta)]_+\|<\infty,
\quad
M_{\nabla f}\triangleq \sup_{(z,\vartheta)\in X\times\Theta}\|\nabla_x f(z,\vartheta)\|<\infty.$
\end{lemma}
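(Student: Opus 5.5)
Starting from formula \eqref{rem:rem6_prop2}, which holds for general $(x,\lambda,\theta)$, we write
\[
\nabla_x\Phi_\rho(x,\lambda,\theta)=\sum_{j=1}^J g_j(x)\,\nabla_x f_j(x,\theta),\qquad g_j(x)\triangleq[\rho f_j(x,\theta)+\lambda^{(j)}]_+ .
\]
For each $j$ we add and subtract the mixed term $g_j(y)\nabla_x f_j(x,\theta)$. This gives
\[
g_j(x)\nabla_x f_j(x,\theta)-g_j(y)\nabla_x f_j(y,\theta)=\bigl(g_j(x)-g_j(y)\bigr)\nabla_x f_j(x,\theta)+g_j(y)\bigl(\nabla_x f_j(x,\theta)-\nabla_x f_j(y,\theta)\bigr).
\]

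\textbf{First term.} The map $[\cdot]_+$ is $1$-Lipschitz and $f_j(\cdot,\theta)$ is $L_{f,x}$-Lipschitz by Assumption \ref{assum:assum1}(iii). Hence
\[
|g_j(x)-g_j(y)|\le\rho|f_j(x,\theta)-f_j(y,\theta)|\le \rho L_{f,x}\|x-y\|.
\]
We also have $\|\nabla_x f_j(x,\theta)\|\le M_{\nabla f}$, where $M_{\nabla f}$ is finite because $X\times\Theta$ is compact and $\nabla_x f$ is continuous. (If only $x$-continuity is given, one uses instead the bound $\|\nabla_x f_j\|\le L_{f,x}$ implied by Lipschitzness.) So the first term is at most $\rho L_{f,x}M_{\nabla f}\|x-y\|$ for each $j$.

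\textbf{Second term.} Subadditivity of $[\cdot]_+$ gives $0\le g_j(y)\le\rho[f_j(y,\theta)]_++[\lambda^{(j)}]_+\le \rho D_f+|\lambda^{(j)}|$. Here $[f_j(y,\theta)]_+\le\|[f(y,\theta)]_+\|\le D_f$, and $D_f<\infty$ by compactness and continuity of $f$ on $X\times\Theta$. Combining this with the $L_{\nabla f,x}$-Lipschitzness of $\nabla_x f_j(\cdot,\theta)$, the second term is at most $(\rho D_f+|\lambda^{(j)}|)L_{\nabla f,x}\|x-y\|$.

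\textbf{Summing over $j$.} By the triangle inequality,
\[
\|\nabla_x\Phi_\rho(x,\lambda,\theta)-\nabla_x\Phi_\rho(y,\lambda,\theta)\|\le\sum_{j=1}^J\Bigl[\rho\bigl(L_{f,x}M_{\nabla f}+L_{\nabla f,x}D_f\bigr)+L_{\nabla f,x}|\lambda^{(j)}|\Bigr]\|x-y\|.
\]
The $\rho$-part totals $J\rho(\cdots)$. Our target constant has $\sqrt J$, and we have $J\ge\sqrt J$, so we need a sharper count there. For the $\lambda$-part, Cauchy--Schwarz gives $\sum_j|\lambda^{(j)}|\le\sqrt J\|\lambda\|$, which yields $C_2=\sqrt J L_{\nabla f,x}$ exactly.

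\textbf{Main obstacle: getting $\sqrt J$ on the $\rho$-part.} To do this I would avoid per-$j$ sup bounds and work in vector form. Let $G(x)\in\mathbb R^J$ collect the $g_j(x)$, so that $\nabla_x\Phi_\rho=\nabla_x f(x,\theta)^\top G(x)$ in the paper's convention.

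For the second term, $\sum_j g_j(y)\|\nabla_x f_j(x,\theta)-\nabla_x f_j(y,\theta)\|\le L_{\nabla f,x}\|x-y\|\,\|G(y)\|_1\le L_{\nabla f,x}\|x-y\|\sqrt J\,\|G(y)\|$. Then $\|G(y)\|\le\rho\|[f(y,\theta)]_+\|+\|\lambda\|\le\rho D_f+\|\lambda\|$, which gives $\sqrt J(L_{\nabla f,x}D_f\rho+L_{\nabla f,x}\|\lambda\|)$.

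For the first term, $\|\nabla_x f(x,\theta)^\top(G(x)-G(y))\|\le M_{\nabla f}\|G(x)-G(y)\|$, using the operator norm that $M_{\nabla f}$ denotes. Componentwise, $\|G(x)-G(y)\|\le\rho\|f(x,\theta)-f(y,\theta)\|\le\rho\sqrt J L_{f,x}\|x-y\|$.

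Adding the two pieces gives exactly $\rho C_1+\|\lambda\|C_2$, with $C_1=\sqrt J(L_{\nabla f,x}D_f+L_{f,x}M_{\nabla f})$ and $C_2=\sqrt J L_{\nabla f,x}$. The only care needed is consistency between the norm used in $M_{\nabla f}$ (operator norm of the Jacobian) and the componentwise Lipschitz constants.
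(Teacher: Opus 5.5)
Your final vector-form argument is correct and is essentially the paper's proof. Like the paper, you split the difference via a mixed term, bound the $[\cdot]_+$-difference piece by $M_{\nabla f}\,\rho\sqrt{J}L_{f,x}\|x-y\|$, and bound the Jacobian-difference piece by $\sqrt{J}L_{\nabla f,x}(\rho D_f+\|\lambda\|)\|x-y\|$. The differences are cosmetic: you add and subtract $\nabla_x f(x,\theta)^\top G(y)$ where the paper uses $\nabla_x f(y,\theta)^\top G(x)$, and you get the $\sqrt{J}$ on the Jacobian piece via $\|G(y)\|_1\le\sqrt{J}\|G(y)\|$ rather than a Frobenius-norm bound. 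Your preliminary per-$j$ pass, which only gives a factor $J$, can simply be dropped.
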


\begin{proof}{Proof}
See Appendix \ref{sec:lem3} for the proof. 
\end{proof}

\begin{lemma}[One-step analysis]
    \label{prop:prop_1}
    Let $\{(x_k, \lambda_k, \theta_k)\}_{k \geq 0}$ be the sequence generated by Algorithm \ref{alg:alm_mvi} and suppose Assumption \eqref{assum:assum1} holds. Then, for any $x \in X$, $\lambda \in \mathbb{R}^J_+$, the following inequality holds for any $k\geq 0$, 
\begin{align*}
     &\left(x_{k+1}-x\right)^{\top} F\left(x,\theta^*\right) + \sum_{j=1}^J\lambda^{(j)}f_j(x_{k+1},\theta^*) - \Phi_{\rho_k}(x,\lambda_k,\theta^*)  \notag\\
    & \quad \leq \frac{1}{2\gamma_k}\Big(\|x_k - x\|^2 - \|x_{k+1} - x\|^2\Big) + \frac{1}{2\rho_k}\Big(\|\lambda_k-\lambda\|^2 - \| \lambda_{k+1}-\lambda\|^2 \Big) +\frac{L_{F,\theta}}{2} \|\theta_{k+1} - \theta_k\|^2\notag \\
    &\qquad + \left(\frac{\rho_k C_1 +\|\lambda^*\|C_2+2 L_{F,x}+ L_{F,\theta}}{2}  - \frac{1}{2\gamma_k} \right) \|x_{k+1} - x_k\|^2 +\frac{C_2}{2} \|\lambda_k-\lambda^*\| \|x_{k+1}-x_k\|^2 \notag\\
&\qquad + \langle r_{k+1}, x_{k+1}-x\rangle - \langle r_k, x_k-x\rangle  + L_{F,\theta}\| \theta_{k+1}-\theta^*\| \|x_{k+1}-x\| +\frac{J \rho_k}{2} L^2_{f,\theta} \|\theta_k - \theta^*\|^2 \notag \\
&\qquad  + L^{\Phi}_{x \theta}\|\theta_k - \theta^*\| \Big(\|x_{k+1}-x_k\| + \|x_k - x\| \Big)  + \frac{L_{F,x}}{2} \left( \|x_k - x_{k-1}\|^2 - \|x_{k+1} - x_k\|^2 \right) \notag \\
&\qquad  + \frac{L_{F,\theta}}{2} \left( \|\theta_k - \theta_{k-1}\|^2 - \|\theta_{k+1} - \theta_k\|^2 \right) + L_{\lambda \theta}^{\Phi}\|\lambda_{k+1}-\lambda\|\|\theta_k-\theta^*\|  \notag\\
& \qquad+  2L_{f,\theta}(J\rho_k D_f+\sqrt{J}\|\lambda_k\|)\|\theta_k-\theta^*\|,
\end{align*}
where $r_k = F(x_k, \theta_k) - F(x_{k-1}, \theta_{k-1})$, $\gamma_k > 0$ is the step-size, and $\rho_k > 0$ is the penalty parameter.
\end{lemma}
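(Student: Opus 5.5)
The plan is to start from the variational characterization of the projection in the $x$-update of Algorithm~\ref{alg:alm_mvi}. Write $g_k \triangleq F(x_k,\theta_k)+r_k+\nabla_x\Phi_{\rho_k}(x_k,\lambda_k,\theta_k)$. Then, for any $x\in X$,
\[
\langle g_k, x_{k+1}-x\rangle \le \tfrac{1}{2\gamma_k}\big(\|x_k-x\|^2-\|x_{k+1}-x\|^2-\|x_{k+1}-x_k\|^2\big).
\]
This supplies the first distance-telescoping term and the $-\tfrac{1}{2\gamma_k}\|x_{k+1}-x_k\|^2$ term. The rest of the argument lower-bounds $\langle g_k,x_{k+1}-x\rangle$ by the left-hand side of the claim, minus error terms.

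\textbf{Operator part.} By monotonicity (Assumption~\ref{assum:assum1}(i)),
\[
(x_{k+1}-x)^\top F(x,\theta^*)\le (x_{k+1}-x)^\top F(x_{k+1},\theta^*).
\]
Next, split $F(x_{k+1},\theta^*)=F(x_{k+1},\theta_{k+1})+[F(x_{k+1},\theta^*)-F(x_{k+1},\theta_{k+1})]$. The bracket contributes $L_{F,\theta}\|\theta_{k+1}-\theta^*\|\|x_{k+1}-x\|$. Then use
\[
F(x_{k+1},\theta_{k+1})-F(x_k,\theta_k)-r_k = r_{k+1}-r_k,
\]
which produces $\langle r_{k+1},x_{k+1}-x\rangle-\langle r_k,x_k-x\rangle$ plus the leftover $\langle r_k, x_{k+1}-x_k\rangle$. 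We handle the leftover by
\[
\|r_k\|\le L_{F,x}\|x_k-x_{k-1}\|+L_{F,\theta}\|\theta_k-\theta_{k-1}\|
\]
and Young's inequality. This yields the terms $\frac{L_{F,x}}{2}(\|x_k-x_{k-1}\|^2+\|x_{k+1}-x_k\|^2)$ and $\frac{L_{F,\theta}}{2}(\|\theta_k-\theta_{k-1}\|^2+\|x_{k+1}-x_k\|^2)$. We then rewrite these as the differences displayed in the claim, adding back $L_{F,x}\|x_{k+1}-x_k\|^2$ and $\frac{L_{F,\theta}}{2}\|\theta_{k+1}-\theta_k\|^2$. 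This accounts for the coefficient $2L_{F,x}+L_{F,\theta}$ and the stand-alone $\frac{L_{F,\theta}}{2}\|\theta_{k+1}-\theta_k\|^2$.

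\textbf{Penalty part.} First replace $\nabla_x\Phi_{\rho_k}(x_k,\lambda_k,\theta_k)$ by $\nabla_x\Phi_{\rho_k}(x_k,\lambda_k,\theta^*)$. The difference is bounded by $L^\Phi_{x\theta}\|\theta_k-\theta^*\|$ (the constant built from $L_{f,\theta}$, $L_{\nabla f,x}$ and bounds on $\lambda_k$), and multiplying by $\|x_{k+1}-x\|\le\|x_{k+1}-x_k\|+\|x_k-x\|$ gives the $L^\Phi_{x\theta}$ term. Since $\Phi_{\rho_k}(\cdot,\lambda_k,\theta^*)$ is convex (composition of the convex nondecreasing $\phi_\rho(\cdot,v)$ with convex $f_j$) and its gradient is Lipschitz by Lemma~\ref{lemma:lip_grad}, the descent lemma combined with convexity gives
\[
\langle\nabla_x\Phi(x_k),x_{k+1}-x\rangle\ge \Phi(x_{k+1})-\Phi(x)-\tfrac{\rho_kC_1+\|\lambda_k\|C_2}{2}\|x_{k+1}-x_k\|^2 .
\]
Bounding $\|\lambda_k\|\le\|\lambda^*\|+\|\lambda_k-\lambda^*\|$ produces exactly the $\|\lambda^*\|C_2$ coefficient and the $\frac{C_2}{2}\|\lambda_k-\lambda^*\|\|x_{k+1}-x_k\|^2$ term.

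\textbf{Combining.} At this point $-\Phi_{\rho_k}(x,\lambda_k,\theta^*)$ and $+\Phi_{\rho_k}(x_{k+1},\lambda_k,\theta^*)$ appear. Add $\sum_j\lambda^{(j)}f_j(x_{k+1},\theta^*)-\Phi_{\rho_k}(x_{k+1},\lambda_k,\theta^*)$ to both sides and invoke Lemma~\ref{lemma:lemma2}. This supplies the $\lambda$-telescoping term, $\frac{J\rho_k}{2}L_{f,\theta}^2\|\theta_k-\theta^*\|^2$, the $L^\Phi_{\lambda\theta}$ term, and the $2L_{f,\theta}(J\rho_kD_f+\sqrt J\|\lambda_k\|)$ term. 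Collecting everything gives the stated inequality.

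The main obstacle is the bookkeeping in the reflected-step term: the signs must be arranged so that $\langle r_{k+1},\cdot\rangle-\langle r_k,\cdot\rangle$ telescopes, while the Young's-inequality remainders land in the stated differences and coefficients. A secondary difficulty is the misspecification of the gradient $\nabla_x\Phi$ in $\theta$, which needs boundedness of $[\rho_k f+\lambda_k]_+$ to define $L^\Phi_{x\theta}$.
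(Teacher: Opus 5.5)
Your proposal is correct and follows essentially the same route as the paper's proof: the projection inequality, adding and subtracting $F(x_{k+1},\theta_{k+1})$ with monotonicity of $F(\cdot,\theta^*)$ and Young's inequality on $\langle r_k,x_{k+1}-x_k\rangle$, swapping $\theta_k$ for $\theta^*$ in $\nabla_x\Phi_{\rho_k}$ and then using convexity together with the descent lemma from Lemma~\ref{lemma:lip_grad} and $\|\lambda_k\|\le\|\lambda^*\|+\|\lambda_k-\lambda^*\|$, and finally invoking Lemma~\ref{lemma:lemma2}. As you noted, the paper also treats $L^{\Phi}_{x\theta}$ as a given constant; strictly speaking, it requires Lipschitz continuity of $\nabla_x f_j(x,\cdot)$ in $\theta$ (not $L_{\nabla f,x}$) plus a bound on $\|\lambda_k\|$, and neither is part of Assumption~\ref{assum:assum1}.
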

\begin{proof}{Proof}
See Appendix \ref{SecB} for the proof.
\end{proof}

Note that the last term on the right-hand side of the inequality contains the product $\|\lambda_{k+1}-\lambda\|\|\theta_k-\theta^*\|$. Assuming that the learning iterates $\{\theta_k\}$ are converging to $\theta^*$, this term will be summable only if the dual iterates do not grow. In fact, in the following lemma, we establish that the dual iterates $\{\lambda_k\}_{k\geq 0}$ are bounded.

\begin{lemma} [Boundedness of Dual Iterates]\label{prop:prop2}
Suppose Assumption \ref{assum:assum1} holds, and define $D_{X}\triangleq\sup_{x\in X}\|x-x^*\| <\infty$ and $D_{\Theta} \triangleq \sup_{\theta\in\Theta}\|\theta-\theta^*\| <\infty$. Consider Algorithm \ref{alg:alm_mvi} with constant penalty $\rho_k \equiv \rho > 0$
and primal step-size
$\gamma_k \equiv \gamma \in \bigl(0,(\rho C_1+\|\lambda^*\|C_2+2L_{F,x}+L_{F,\theta}+C_2 D_\Lambda)^{-1}\bigr)$ for some $D_\Lambda>0$ where $C_1,C_2$ are defined in Lemma~\ref{lemma:lip_grad}.
Assume that the learning error satisfies $\sum_{k=0}^\infty
\|\theta_k-\theta^*\|
<
\infty$ and $\sum_{k=0}^\infty
(k+1)^2\|\theta_k-\theta^*\|^2
<
\infty,$ and $\rho_k=\rho=\frac{1}{L_{\lambda \theta}^\Phi}$ %$0<\rho L_{\lambda\theta}^{\Phi}\alpha_k<1$ 
for all $k\geq 0$. Then the dual sequence $\{\lambda_k\}$ is bounded. 
\end{lemma}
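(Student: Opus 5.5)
We will prove boundedness by plugging $x=x^*$ and $\lambda=\lambda^*$ into the one-step bound of Lemma~\ref{prop:prop_1}, summing it over $k$, and closing the resulting estimate with a strong-induction (discrete Gronwall) argument on $\|\lambda_k-\lambda^*\|$. The constant $D_\Lambda$ in the step-size is the target bound, so we will also show that the induction hypothesis $\|\lambda_t-\lambda^*\|\le D_\Lambda$ for $t\le k$ propagates to $t=k+1$.

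\textbf{Step 1 (nonnegativity of the left-hand side).} With $x=x^*$, $\lambda=\lambda^*$, we have $\Phi_\rho(x^*,\lambda_k,\theta^*)\le \sum_j \lambda_k^{(j)}f_j(x^*,\theta^*)\le 0$, since $f_j(x^*,\theta^*)\le0$ and $\phi_\rho(u,v)\le uv$ for $u\le 0$, $v\ge0$. This gives $-\Phi_\rho(x^*,\lambda_k,\theta^*)\ge 0$. Monotonicity of $F(\cdot,\theta^*)$ together with Lemma~\ref{lemma:lem1}, applied through the KKT conditions of Proposition~\ref{prop1} (the Lagrangian inequality $(x_{k+1}-x^*)^\top F(x^*,\theta^*)+\sum_j\lambda^{*(j)}f_j(x_{k+1},\theta^*)\ge0$ holds on all of $X$ via stationarity and convexity), shows that the left-hand side is nonnegative.

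\textbf{Step 2 (telescoping).} We sum from $k=0$ to $K-1$. The $\|x_k-x\|^2$, $\|\lambda_k-\lambda\|^2$, $r_k$, and $\|x_k-x_{k-1}\|^2$, $\|\theta_k-\theta_{k-1}\|^2$ differences all telescope. The reflected term $\langle r_K,x_K-x^*\rangle$ is absorbed by Young's inequality into $\tfrac{1}{2\gamma}\|x_K-x^*\|^2$ plus $L_{F,x}\|x_K-x_{K-1}\|^2$ and $\theta$ terms. Under the induction hypothesis $\|\lambda_k-\lambda^*\|\le D_\Lambda$, the coefficient of $\|x_{k+1}-x_k\|^2$ is $\tfrac12(\rho C_1+\|\lambda^*\|C_2+2L_{F,x}+L_{F,\theta}+C_2D_\Lambda)-\tfrac1{2\gamma}<0$ by the step-size choice, so these terms can be dropped (or used to absorb the $L^\Phi_{x\theta}\|\theta_k-\theta^*\|\|x_{k+1}-x_k\|$ cross term via Young). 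The remaining error terms are $L_{F,\theta}D_\Theta D_X$-type products times $\|\theta_k-\theta^*\|$, $\rho\|\theta_k-\theta^*\|^2$, $\|\theta_{k+1}-\theta_k\|^2\le 2\|\theta_{k+1}-\theta^*\|^2+2\|\theta_k-\theta^*\|^2$, and $\|\lambda_k\|\le \|\lambda_k-\lambda^*\|+\|\lambda^*\|$ multiplied by $\|\theta_k-\theta^*\|$. All of these are summable by the assumptions $\sum\|\theta_k-\theta^*\|<\infty$ and $\sum(k+1)^2\|\theta_k-\theta^*\|^2<\infty$. The outcome is
\[
\tfrac{1}{2\rho}\|\lambda_K-\lambda^*\|^2\le C_0+\sum_{k=0}^{K-1}\bigl(L^\Phi_{\lambda\theta}+2L_{f,\theta}\sqrt J\bigr)\|\theta_k-\theta^*\|\,\bigl(\|\lambda_{k+1}-\lambda^*\|+\|\lambda_k-\lambda^*\|\bigr),
\]
with $C_0$ depending only on $\|\lambda^*\|^2$, $D_X$, $D_\Theta$, and the summable learning errors.

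\textbf{Step 3 (closing the recursion; main obstacle).} Setting $u_k=\|\lambda_k-\lambda^*\|$ and using $\rho=1/L^\Phi_{\lambda\theta}$, the inequality reads $u_K^2\le 2\rho C_0+2\sum_{k\le K}c\,\epsilon_k u_k$ with $\epsilon_k=\|\theta_{k-1}-\theta^*\|+\|\theta_k-\theta^*\|$ summable. The standard lemma for $u_K^2\le S+\sum_{k\le K}a_ku_k$ gives $u_K\le \tfrac12\sum a_k+\bigl(S+(\tfrac12\sum a_k)^2\bigr)^{1/2}$; the term $k=K$ is handled because $a_K u_K\le \tfrac12 u_K^2+\tfrac12a_K^2$ when $a_K\le1$, which the choice $\rho L^\Phi_{\lambda\theta}=1$ is meant to guarantee after rescaling. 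This yields a bound $\bar D$ independent of $K$. The delicate point is circularity: $D_\Lambda$ enters the step-size and the constants in $C_0$ (through the absorbed $\|x_{k+1}-x_k\|$ cross terms), so we must verify that $\bar D\le D_\Lambda$ for an admissible choice. We will do this by noting that the $D_\Lambda$-dependence of $C_0$ enters only through terms multiplied by summable learning errors, with the $(k+1)^2$-weighted condition used to control the growth of the accumulated cross terms. We will then choose $D_\Lambda$ large enough that the induction closes, and conclude $\sup_k\|\lambda_k\|\le\|\lambda^*\|+D_\Lambda$.
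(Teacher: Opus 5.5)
Your argument is correct, but it closes the dual recursion by a different mechanism than the paper's proof. The paper applies Young's inequality to $L^\Phi_{\lambda\theta}\|\lambda_{k+1}-\lambda^*\|\|\theta_k-\theta^*\|$ with weights $\alpha_k=(k+2)^{-2}$. This leaves a factor $1-\rho L^\Phi_{\lambda\theta}\alpha_k$ in front of $\|\lambda_{k+1}-\lambda^*\|^2$. The choice $\rho=1/L^\Phi_{\lambda\theta}$ makes the compensating weights $t_k=\prod_{i<k}(1-\alpha_i)=\frac{k+2}{2(k+1)}$ lie in $[\tfrac12,1]$, and the price $\frac{L^\Phi_{\lambda\theta}}{2\alpha_k}\|\theta_k-\theta^*\|^2$ is exactly what the $(k+1)^2$-weighted summability pays for. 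The other dual term, $\|\lambda_k\|\|\theta_k-\theta^*\|$, is bounded by $D_\Lambda+\|\lambda^*\|$ through the induction hypothesis. The final bound on $\|\lambda_K-\lambda^*\|^2$ is therefore affine in $D_\Lambda$, and the explicit choice $D_\Lambda=B+\sqrt{A}$ closes the induction. You instead keep both cross terms linear in $u_k=\|\lambda_k-\lambda^*\|$ and apply a Bihari-type estimate through the running maximum: $M^2\le S+AM$ with $A=\sum_k a_k$, hence $M\le \tfrac{A}{2}+\sqrt{S+A^2/4}$. This needs only $\sum_k\|\theta_k-\theta^*\|<\infty$, which already implies $\sum_k\|\theta_k-\theta^*\|^2<\infty$, and it works for every constant $\rho>0$. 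Your route therefore drops two hypotheses of the statement, while the paper's route buys a one-line weighted recursion with a closed-form $D_\Lambda$.

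As a consequence, several of your side remarks are inaccurate, though none breaks the proof.
\begin{itemize}
\item Neither $\rho L^\Phi_{\lambda\theta}=1$ nor the $(k+1)^2$ condition plays any role in your argument.
\item The $k=K$ term needs no condition $a_K\le 1$: AM--GM, or the max argument, handles it unconditionally.
\item $D_\Lambda$ does not enter your constant only through learning-error terms. It enters through $\frac{1}{2\gamma}\|x_0-x^*\|^2$, since with $\gamma$ near its upper limit (as the paper implicitly takes it), $1/\gamma$ is of order $\rho C_1+\|\lambda^*\|C_2+2L_{F,x}+L_{F,\theta}+C_2D_\Lambda$. What closes the induction is that $S$ is affine in $D_\Lambda$, so your bound grows like $\sqrt{D_\Lambda}$ and falls below $D_\Lambda$ once $D_\Lambda$ is large.
\item In Step 1, the inequality $\phi_\rho(u,v)\le uv$ is false for $u<0$ with $\rho u+v\ge 0$; in fact $\phi_\rho(u,v)\ge uv$ whenever $v\ge 0$. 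The correct justification is $\phi_\rho(u,v)=\bigl([v+\rho u]_+^2-v^2\bigr)/(2\rho)\le 0$ for $u\le 0\le v$.
\end{itemize}
Your remark that the Lagrangian inequality must hold on all of $X$ is correct and more careful than the paper. You derive it from stationarity, convexity and complementary slackness, rather than from Lemma~\ref{lemma:lem1}, which is stated only on $\mathcal{X}(\theta^*)$. The paper applies that lemma at possibly infeasible iterates.
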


\begin{proof}{Proof}
  See Appendix \ref{SecC} for the proof.
\end{proof}

Lemma \ref{prop:prop2} establishes bounded dual iterates under compactness and summable learning errors when we use a constant primal step-size and a penalty parameter. We adopt this schedule henceforth, which yields a uniform bound $D_\Lambda\triangleq \sup_{k\geq 0}\|\lambda_k-\lambda^*\|<+\infty$. With this standing in place, we now state the main convergence result for the ergodic primal averages.

\begin{theorem} [Convergence Rate]\label{Thm:thm1} 
Suppose Assumption \ref{assum:assum1} holds, and Algorithm \ref{alg:alm_mvi} is executed under the conditions of Lemma \ref{prop:prop2}. Then, for the ergodic average $\bar x_K=\tfrac{1}{K}\sum_{k=1}^K x_k$, the following hold:
\begin{itemize}
    \item[(i)]  (Infeasibility:) $\mathbf{1}^\top [f(\bar{x}_K,\theta^*)]_+
\leq \frac{C_{\mathrm{feas}}}{K}$.
    \item [(ii)] (Relaxed gap:) $ \widetilde{\mathrm{Gap}}(\bar x_K,\theta^*) \triangleq
\sup_{x\in\mathcal{X}_{\epsilon_K}(\theta^*)}
F(x,\theta^*)^\top(\bar x_K-x)
\leq \frac{C'_T + C'_R(x,0)}{K}
+ D_\Lambda \frac{C_{\mathrm{feas}}}{K}
+ \frac{\rho}{2}\left(\frac{C_{\mathrm{feas}}}{K}\right)^2,$
\end{itemize}
where $C'_T \triangleq \tfrac{1}{2\gamma} D_X^2 + 2L_{F,\theta}D_XD_\Theta$,  $C'_R(x,0)= \Big(L_{F,\theta}D_X + 2L^\Phi_{x\theta}D_X + 2L^\Phi_{\lambda\theta}D_\Lambda
+2L_{f,\theta}\big(J\rho D_f+\sqrt{J}(D_\Lambda+\|\lambda^*\|)\big)\Big)\sum_{k=0}^{\infty}\|\theta_k-\theta^*\|
+\left(L_{F,\theta}+\frac{J\rho}{2}L_{f,\theta}^2\right)\sum_{k=0}^{\infty}\|\theta_k-\theta^*\|^2$, and $C_{\mathrm{feas}} = C'_T +\Big(L_{F,\theta}D_X + 2L^\Phi_{x\theta}D_X + 2 L^\Phi_{\lambda\theta}D_\Lambda
+2L_{f,\theta}\big(J\rho D_f+\sqrt{J}(D_\Lambda+\|\lambda^*\|)\big)\Big)
\sum_{k=0}^{\infty}\|\theta_k-\theta^*\| 
+\left(L_{F,\theta}+\frac{J\rho}{2}L_{f,\theta}^2\right)
\sum_{k=0}^{\infty}\|\theta_k-\theta^*\|^2
+\frac{J+\|\lambda_0-\lambda^*\|^2}{\rho}$.
\end{theorem}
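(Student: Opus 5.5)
The plan is to sum the one-step inequality of Lemma~\ref{prop:prop_1} over $k=0,\dots,K-1$ with the constant choices $\rho_k\equiv\rho$, $\gamma_k\equiv\gamma$ from Lemma~\ref{prop:prop2}, and to use the uniform bound $D_\Lambda=\sup_k\|\lambda_k-\lambda^*\|$ to control every term involving the duals.

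\textbf{Telescoping the one-step bound.} The terms $\frac{1}{2\gamma}(\|x_k-x\|^2-\|x_{k+1}-x\|^2)$ and $\frac{1}{2\rho}(\|\lambda_k-\lambda\|^2-\|\lambda_{k+1}-\lambda\|^2)$ telescope. So do $\frac{L_{F,x}}{2}(\|x_k-x_{k-1}\|^2-\|x_{k+1}-x_k\|^2)$ and its $\theta$-analogue, since $x_{-1}=x_0$ and $\theta_{-1}=\theta_0$. The reflected terms $\langle r_{k+1},x_{k+1}-x\rangle-\langle r_k,x_k-x\rangle$ also telescope, because $r_0=0$.

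\textbf{Handling the leftover terms.} The boundary term $\langle r_K,x_K-x\rangle$ I bound by Young's inequality as $\frac{L_{F,x}}{2}\|x_K-x_{K-1}\|^2+\frac{L_{F,x}}{2}\|x_K-x\|^2+L_{F,\theta}\|\theta_K-\theta_{K-1}\|\|x_K-x\|$. These pieces are absorbed by the leftover $-\frac{1}{2\gamma}\|x_K-x\|^2$ and the negative $\|x_{k+1}-x_k\|^2$ coefficients, the latter being nonpositive thanks to the step-size restriction $\gamma<(\rho C_1+\|\lambda^*\|C_2+2L_{F,x}+L_{F,\theta}+C_2D_\Lambda)^{-1}$. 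The remaining $\|\theta_{k+1}-\theta_k\|^2$ terms are bounded by $2\|\theta_{k+1}-\theta^*\|^2+2\|\theta_k-\theta^*\|^2$. All the $\theta$-error terms are then bounded using $\|x_k-x\|\le 2D_X$, $\|\lambda_{k+1}-\lambda\|\le D_\Lambda+\|\lambda^*-\lambda\|$, and $\|\lambda_k\|\le D_\Lambda+\|\lambda^*\|$, which gives exactly the summable constants $C'_T$ and $C'_R$.

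Dividing by $K$ and using monotonicity of $F(\cdot,\theta^*)$, linearity of $(\bar x_K-x)^\top F(x,\theta^*)$, and convexity of $f_j(\cdot,\theta^*)$ (Jensen), I obtain for all $x\in X$, $\lambda\ge0$:
\[
(\bar x_K-x)^\top F(x,\theta^*)+\lambda^\top f(\bar x_K,\theta^*)-\tfrac1K\textstyle\sum_k\Phi_\rho(x,\lambda_k,\theta^*)\le \frac{C'_T+C'_R+\frac{1}{2\rho}\|\lambda_0-\lambda\|^2}{K}.
\]

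\textbf{(i) Infeasibility.} I take $x=x^*$. Since $f_j(x^*,\theta^*)\le0$, we have $\phi_\rho(f_j(x^*,\theta^*),v)\le 0$ for $v\ge0$, so the $\Phi$ term is nonpositive. By Lemma~\ref{lemma:lem1} (the VI/KKT property), $(\bar x_K-x^*)^\top F(x^*,\theta^*)\ge -\lambda^{*\top}f(\bar x_K,\theta^*)$. The key step is to combine this with monotonicity, $F(x^*)^\top(\bar x_K-x^*)\le F(\bar x)\ldots$. It is cleaner to apply the one-step bound in its form with $F(x,\theta^*)$ at $x=x^*$, which gives $(\bar x_K-x^*)^\top F(x^*,\theta^*)+\lambda^\top f(\bar x_K,\theta^*)\le(\cdots)/K$. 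Then I choose $\lambda=\lambda^*+\mathbf 1_{\{f_j(\bar x_K)>0\}}$. The $\lambda^*$ part cancels against the lower bound, leaving $\mathbf 1^\top[f(\bar x_K,\theta^*)]_+$ on the left. On the right, $\|\lambda_0-\lambda\|^2\le 2(J+\|\lambda_0-\lambda^*\|^2)$, which yields $C_{\mathrm{feas}}/K$. Getting the precise factors right here is where I expect most bookkeeping care.

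\textbf{(ii) Relaxed gap.} I take $\lambda=0$ and $x\in\mathcal X_{\epsilon_K}(\theta^*)$ with $\epsilon_K=C_{\mathrm{feas}}/K$. The hard part is bounding $-\Phi_\rho(x,\lambda_k,\theta^*)$ from below now that $x$ may be infeasible. Per constraint, $\phi_\rho(u,v)\le v[u]_++\frac\rho2[u]_+^2$ for $v\ge0$. This gives $\Phi_\rho(x,\lambda_k,\theta^*)\le\|\lambda_k\|_\infty\mathbf 1^\top[f]_++\frac\rho2(\mathbf 1^\top[f]_+)^2\le D_\Lambda\epsilon_K+\frac\rho2\epsilon_K^2$, with $\|\lambda_k\|$ absorbed into $D_\Lambda$ (up to $\|\lambda^*\|$, as in the stated constant). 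Substituting into the summed inequality and taking the supremum over $x$ gives (ii). The main obstacle is this bound on the $\Phi$ term for infeasible test points, which is exactly why the gap must be relaxed to the $\epsilon_K$-enlarged set.
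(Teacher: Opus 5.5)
Your proposal follows the paper's proof essentially step for step. You sum Lemma~\ref{prop:prop_1}, telescope into $C'_T$ and a $C'_R$ controlled by $D_\Lambda$ and the summable learning errors, and take $x=x^*$ with Lemma~\ref{lemma:lem1} and a tailored $\lambda$ for infeasibility; for the relaxed gap you take $\lambda=\mathbf 0$ and use the bound $\Phi_\rho(x,\lambda_k,\theta^*)\le D_\Lambda\epsilon_K+\frac{\rho}{2}\epsilon_K^2$ on $\mathcal X_{\epsilon_K}(\theta^*)$. Your choice $\lambda=\lambda^*+\mathbf 1_{\{f_j(\bar x_K,\theta^*)>0\}}$ is a harmless variant of the paper's $\tilde\lambda$ (which zeroes the inactive coordinates and uses Lemma~\ref{lemma:lem1} with $[f]_+$), giving the same $\frac{J+\|\lambda_0-\lambda^*\|^2}{\rho K}$ term; like the paper, you apply Lemma~\ref{lemma:lem1} at the possibly infeasible $\bar x_K$, which is legitimate because KKT stationarity, convexity and complementary slackness give that inequality on all of $X$.
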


\begin{proof}{Proof}
  See Appendix \ref{SecD} for the proof. 
\end{proof}

\begin{remark}
Based on Assumption \ref{assum:assum1}-(ii), the secondary VI problem \eqref{eq:SVI} has a Lipschitz continuous and strongly monotone operator which implies that the sequence $\{\theta_k\}_{k\geq 0}$ generated by the projected gradient step in line 7 of Algorithm \ref{alg:alm_mvi} converges to the optimal unique parameter $\theta^*$ at a linear convergence rate (e.g. see \citep[Proposition 26.16]{bauschke2011convex}); hence, the learning error accumulations $\sum_k \|\theta_k-\theta^*\|$ and $\sum_k (k+2)^2\|\theta_k-\theta^*\|^2$ in the upper bounds of the result of Theorem \ref{Thm:thm1} are finite with a uniform bound. Therefore, the result of Theorem \ref{Thm:thm1} implies that achieving $\epsilon$-infeasibility and $\epsilon$-relaxed gap requires at most running $\mathcal O(1/\epsilon)$ iterations of Algorithm \ref{alg:alm_mvi}. We finally remark that the update of misspecified parameter $\theta$ in the algorithm can be replaced with other methods, such as extra-gradient method, as long as $\theta_k\to \theta^*$ at a linear rate.
\end{remark}

\section{Numerical Experiment}\label{sec:numerics}
In this section, we test the performance of our proposed method by solving the Cournot model described in Section \ref{sec:motivating-example} and compare with the Tikhonov regularized VI scheme \citep{jiang2011learning,ahmadi2020resolution} and Extragradient method by \citep{ahmadi2020resolution}.

We consider a market with $N$ firms, where each firm $ i \in \{1,\dots,N\} $ produces $D$ homogeneous products so the total decision variable dimension is $n = ND$. For each product $d = 1,\dots,D$ of firm $i$, the capacity is $\text{Cap}_i=5$ and the quadratic cost function $c_i(x_i)=\tfrac{1}{2} r_i x_i^2 + g_i x_i$ where $r_i,g_i$ are randomly generated from uniform distributions $U[1,10]$ and $U[5,20]$, respectively. Moreover, $p(X; b^*) = a^* - b^* X$ where $X=\sum_{i=1}^N x_i$, we let $a^*=100$, and $b^*$ is the solution of the least-square minimization \eqref{eq:LS} with synthetic observations generated from a true model $X_t \sim U[2,20]$, $p_{\mathrm{obs},t} = a^* - b^* X_t$, for $t=1,\dots,T=300$. All the firms should also satisfy the coupling constraint of the form $f(\mathbf{x},b^*) \triangleq p(X;b^*) -\delta \leq 0$ for some $\delta>0$. 

We implemented our proposed ALM-Misspecified VI method and compare it with the misspecified Tikhonov regularized VI \citep{jiang2011learning,ahmadi2020resolution} and Extragradient (EG) method by \citep{ahmadi2020resolution}. For all methods, the misspecified parameter is updated using one projected gradient step at each iteration. To handle the coupling constraint for the other two methods, we employ the Lagrangian reformulation for each player and construct the corresponding VI operator by concatenating all the players' gradient vectors of the Lagrangian function with respect to primal and dual variables. We consider three problem instances with $(N,D)\in\{(50,5),(50,10),(100,10)\}$ and run each method for a sufficiently large number of iterations with constant step-sizes chosen according to the theoretical findings. Figure \ref{fig:cournot_all}, shows the performance of the methods in terms of the VI gap $\widetilde{\mathrm{Gap}}(\bar{x}_K,b^*)$ and infeasibility $\mathbf{1}^\top [f(\bar{x}_K,b^*)]_+$. Our proposed method (ALM-Misspecified VI) exhibits a superior performance across all instances, while the Lagrangian-Tikhonov and EG-Lagrangian baselines converge more slowly and deteriorate more noticeably as $(N,D)$ grow.
\begin{figure}[H]
    \centering
    \includegraphics[
        width=\textwidth,
        height=0.40\textheight,
        keepaspectratio
    ]{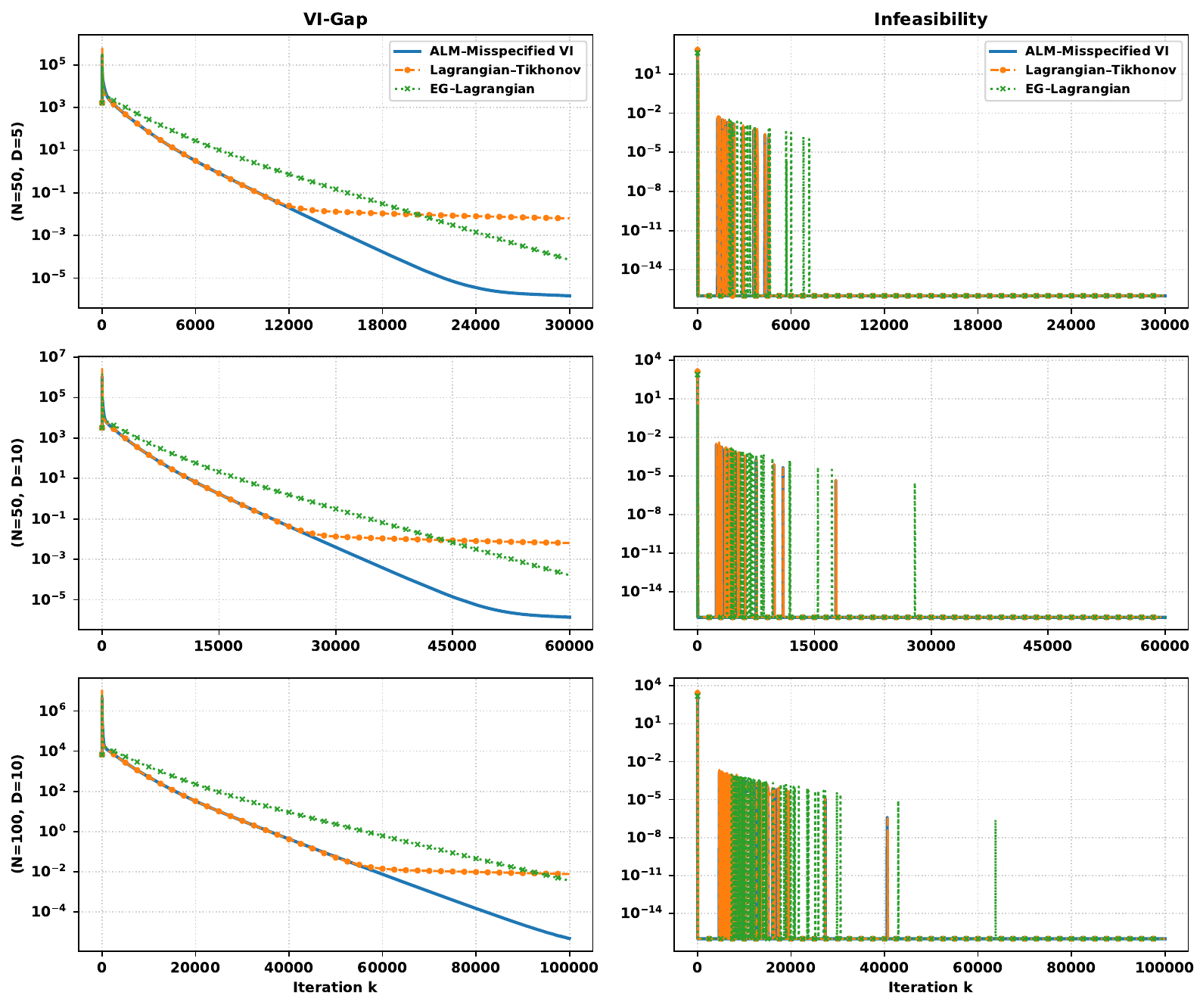}
    \caption{Cournot instances with $(N,D)\in\{(50,5),(50,10),(100,10)\}$.
    }
    \label{fig:cournot_all}
\end{figure}
The plots show the VI-Gap (left) and infeasibility (right) for ALM-Misspecified VI (solid), Lagrangian--Tikhonov (dashed with circles), and EG--Lagrangian (dotted with crosses).

\bibliographystyle{unsrtnat}
\bibliography{sample}

\newpage\begin{appendices}{Technical Proofs }%\begin{APPENDICES}
We include here the full proofs of the lemmas and the theorem for completeness.
\section{Proof of Lemma \ref{lemma:lemma2}}
\label{sec:lem2}
\begin{proof}{Proof}
Let $j$ be an index in $\{1,\dots,J\}$. For any $j^{th}$ coordinate using Algorithm \ref{alg:alm_mvi} (line 6), the dual update is $\displaystyle
    \lambda_{k+1}^{(j)}= \Bigl[\lambda_k^{(j)} + \rho_k f_j(x_{k+1},\theta_k)\Bigr]_+,$
where $[\cdot]_+$ denotes the projection onto $\mathbb{R}_+$. We now consider two cases, when $f_j's$ are active, we have $\displaystyle\lambda_k^{(j)} + \rho_k f_j(x_{k+1},\theta_k) > 0.$ Then the projection acts as the identity, so we have
\begin{align*}
\lambda_{k+1}^{(j)} &= \lambda_k^{(j)} + \rho_k f_j(x_{k+1},\theta_k) \quad \implies
\lambda_{k+1}^{(j)} - \lambda_k^{(j)}= \rho_k f_j(x_{k+1},\theta_k).
\end{align*}
Recall the definition \ref{def:def_aug_lag}, then we have
\begin{align}\label{eq:lemma5.1}
\Bigl[\nabla_{\lambda}\Phi_{\rho_k}(x_{k+1},\lambda_k,\theta_k)\Bigr]_j =
\begin{cases}f_j(x_{k+1},\theta_k) & \text{if } \rho_k f_j(x_{k+1},\theta_k)+\lambda_k^{(j)}\ge 0\\
-\dfrac{\lambda_k^{(j)}}{\rho_k} & \text{if } \rho_k f_j(x_{k+1},\theta_k)+\lambda_k^{(j)}< 0.
\end{cases}
\end{align}
Thus, we conclude that in the active case $\lambda_{k+1}^{(j)} - \lambda_k^{(j)} = \rho_k\Bigl[\nabla_{\lambda}\Phi_{\rho_k}(x_{k+1},\lambda_k,\theta_k)\Bigr]_j.$ Again, if $f_j(x_{k+1},\theta_k)'s$ are inactive, then we have $\lambda_k^{(j)} + \rho_k f_j(x_{k+1},\theta_k) < 0$, hence, $f_j(x_{k+1},\theta_k) < - \frac{\lambda_k^{(j)}}{\rho_k}.$ Thus, we have $\rho_k\Bigl[\nabla_{\lambda}\Phi_{\rho_k}(x_{k+1},\lambda_k,\theta_k)\Bigr]_j = -\lambda_k^{(j)}.$ Therefore, in inactive case also we have $\lambda_{k+1}^{(j)} - \lambda_k^{(j)} = \rho_k\Bigl[\nabla_{\lambda}\Phi_{\rho_k}(x_{k+1},\lambda_k,\theta_k)\Bigr]_j.$ Since the above equality holds for both active and inactive cases, we can write in full vector notation that $\displaystyle \lambda_{k+1} - \lambda_k = \rho_k\nabla_{\lambda}\Phi_{\rho_k}(x_{k+1},\lambda_k,\theta_k).$ Now by adding and subtracting $\nabla_{\lambda}\Phi_{\rho_k}(x_{k+1},\lambda_k,\theta^*)$, we get
\begin{align}\label{eq:lamdak1}
  \lambda_{k+1}-\lambda_k= \rho_k\Bigl[e_k+\nabla_{\lambda}\Phi_{\rho_k}\bigl(x_{k+1},\lambda_k,\theta^*\bigr)
  \Bigr],
\end{align}
where in the last equality, we define $e_k =\nabla_{\lambda}\Phi_{\rho_k}\bigl(x_{k+1},\lambda_k,\theta_k\bigr)-\nabla_{\lambda}\Phi_{\rho_k}\bigl(x_{k+1},\lambda_k,\theta^*\bigr)$. For any $\lambda\in \mathbb{R}_+^J$, using the three-point identity and dividing both sides by $\rho_k$ we get
\begin{align}\label{eq:lemma5:eq1}
  \frac{1}{\rho_k} (\lambda_{k+1} - \lambda)^\top(\lambda_{k+1}-\lambda_k)
   &= 
  \frac{1}{2 \rho_k} \Bigl[ 
    \|\lambda_{k+1}-\lambda\|^2
     - 
    \|\lambda_k - \lambda\|^2
     + 
    \|\lambda_{k+1}-\lambda_k\|^2
  \Bigr].
\end{align}
Using the expression in \eqref{eq:lamdak1} for $\lambda_{k+1}-\lambda_k$, we see
\begin{align}\label{eq:lemma5:eq2}
 \frac{1}{\rho_k} (\lambda_{k+1} - \lambda)^\top(\lambda_{k+1}-\lambda_k)
   &= 
  (\lambda_{k+1} - \lambda)^\top\nabla_{\lambda}\Phi_{\rho_k}\bigl(x_{k+1},\lambda_k,\theta^*\bigr)   + 
  (\lambda_{k+1} - \lambda)^\top e_k. 
\end{align}
Notice that, using (\ref{eq:lemma5:eq1}), we can rewrite (\ref{eq:lemma5:eq2}) as follows
\begin{align}\label{eq:lemma5:eq3}
  \frac{1}{2 \rho_k} \|\lambda_{k+1}-\lambda_k\|^2 &= 
  \frac{1}{2 \rho_k} \|\lambda_k - \lambda\|^2-  \frac{1}{2 \rho_k} \|\lambda_{k+1}-\lambda\|^2 + 
  (\lambda_{k+1} - \lambda)^\top\nabla_{\lambda}\Phi_{\rho_k}\bigl(x_{k+1},\lambda_k,\theta^*\bigr) +
  (\lambda_{k+1} - \lambda)^\top e_k \notag\\
  & \leq \frac{1}{2 \rho_k}\left( \|\lambda_k - \lambda\|^2- \|\lambda_{k+1}-\lambda\|^2\right)+  (\lambda_{k+1} - \lambda)^\top\nabla_{\lambda}\Phi_{\rho_k}\bigl(x_{k+1},\lambda_k,\theta^*\bigr) \notag\\
&\quad +L^\Phi_{\lambda\theta}\|\lambda_{k+1}-\lambda\|\|\theta_k-\theta^*\|,
\end{align}
where the last inequality follows from the fact that $\nabla_{\lambda}\Phi_{\rho_k}(x,\lambda,\theta)$ is Lipschitz continuous in $\theta$ with constant $L^\Phi_{\lambda\theta}$. Now let us define the following sets $J^+_{k} \triangleq \Big\{ j\in[J] : \rho_k f_j(x_{k+1},\theta_k) + \lambda_k^{(j)} \ge 0 \Big\} \text{ and }
J^-_{k} \triangleq [J]\setminus J^+_{k}$. For any  $ j \in J_k^+ $, the dual update in Algorithm \ref{alg:alm_mvi} (line 6) gives
\begin{align*}
    \lambda_{k+1}^{(j)} - \lambda_k^{(j)} &= \rho_k f_j(x_{k+1}, \theta_k) = \rho_k \left[ f_j(x_{k+1}, \theta^*) +\left(f_j(x_{k+1}, \theta_k) - f_j(x_{k+1}, \theta^*)\right)\right].
\end{align*}
Since $a=b+c$, squaring both sides and using the fact that $(b+c)^2 \geq \frac{1}{2} b^2- c^2$, we obtain
\begin{align*}
    \left(\lambda_{k+1}^{(j)}-\lambda_k^{(j)}\right)^2
&=\rho_k^2\left(f_j(x_{k+1},\theta_k)\right)^2 \\
&\geq
\rho_k^2\left(
\frac{1}{2} \left(f_j(x_{k+1},\theta^*)\right)^2
-
\left(f_j(x_{k+1},\theta_k)-f_j(x_{k+1},\theta^*)\right)^2
\right).
\end{align*}
Finally, invoking Lipschitz continuity of  $f_j(x, \theta)$ in $\theta$ from Assumption \ref{assum:assum1}(iii), we have
\begin{align*}
    \left(\lambda_{k+1}^{(j)} - \lambda_k^{(j)}\right)^2 \geq \rho_k^2 \left[\frac{1}{2}\left(f_j(x_{k+1}, \theta^*)\right)^2 - L^2_{f,\theta} \|\theta_k - \theta^*\|^2 \right].
\end{align*}
Summing over $ j \in J_k^+ $ and multiplying by $1/(2\rho_k)$, we have
\begin{align*}
\frac{1}{2\rho_k}\sum_{j \in J_k^+} \left(\lambda_{k+1}^{(j)} - \lambda_k^{(j)}\right)^2 \geq \frac{\rho_k}{4} \sum_{j \in J_k^+} \left(f_j(x_{k+1}, \theta^*)\right)^2 - \frac{J\rho_k}{2} L^2_{f,\theta} \|\theta_k - \theta^*\|^2.
\end{align*}
If $j\in J_k^-$, then $\lambda^{(j)}_{k+1}=0 \implies \left(\lambda^{(j)}_{k+1}-\lambda^{(j)}_k \right)^2=\left(\lambda^{(j)}_k \right)^2$. Multiplying both sides by $\frac{1}{2\rho_k}$, and combining both of the above inequalities, we get
\begin{align*}
\frac{1}{2\rho_k}\|\lambda_{k+1}-\lambda_k\|^2
  \geq \tfrac{\rho_k}{4} \sum_{j\in J_k^+}\left(f_j(x_{k+1},\theta^*)\right)^2
  +\tfrac1{2\rho_k}\sum_{j\in J_k^-}(\lambda_k^{(j)})^2  -\frac{J\rho_k}{2}  L^2_{f,\theta} \|\theta_k - \theta^*\|^2.
\end{align*}
After rearranging, this gives us
\begin{align}\label{eq:eq_dual_1}
 & - \left(\frac{1}{2\rho_k}\|\lambda_{k+1}-\lambda_k\|^2
   +\frac{J \rho_k}{2} L^2_{f,\theta} \|\theta_k - \theta^*\|^2 \right)  \leq - \Biggl[ \tfrac{\rho_k}{4} \sum_{j\in J_k^+} \left(f_j(x_{k+1},\theta^*)\right)^2+\tfrac1{2\rho_k}\sum_{j\in J_k^-}(\lambda_k^{(j)})^2\Biggr].
\end{align}
Now, notice that
\begin{align*}
\Phi_{\rho_k}\bigl(x_{k+1},\lambda_k,\theta^*\bigr) & = \Phi_{\rho_k}\bigl(x_{k+1},\lambda_k,\theta^*\bigr)- \Phi_{\rho_k}\bigl(x_{k+1},\lambda_k,\theta_k\bigr)+\Phi_{\rho_k}\bigl(x_{k+1},\lambda_k,\theta_k\bigr) \notag\\
&=\sum_{j\in J_k^+}
        \Bigl(\frac{\rho_k}{2}\bigl(f_j(x_{k+1},\theta_k)\bigr)^2
              + \lambda_k^{(j)} f_j(x_{k+1},\theta_k)\Bigr)-
      \sum_{j\in J_k^-}\frac{\bigl(\lambda_k^{(j)}\bigr)^2}{2\rho_k} \notag\\
      & \qquad +\Phi_{\rho_k}\bigl(x_{k+1},\lambda_k,\theta^*\bigr)- \Phi_{\rho_k}\bigl(x_{k+1},\lambda_k,\theta_k\bigr).
\end{align*}
For each $j \in J_k^+$, we add and subtract the term $\tfrac{\rho_k}{2}\bigl(f_j(x_{k+1},\theta^*)\bigr)^2+\lambda_k^{(j)}f_j(x_{k+1},\theta^*)$ inside the summation, hence we obtain
\begin{align}\label{eq:lemma5:eq5}
    \Phi_{\rho_k}(x_{k+1},\lambda_k,\theta^*)
&= \sum_{j\in J_k^+}
\Bigl(
\tfrac{\rho_k}{2}\left(f_j(x_{k+1},\theta^*)\right)^2
+
\lambda_k^{(j)}f_j(x_{k+1},\theta^*)
\Bigr) -\sum_{j\in J_k^-}\frac{(\lambda_k^{(j)})^2}{2\rho_k}+ E_k,
\end{align}
where $E_k\triangleq 
\sum_{j\in J_k^+}
\left(
\tfrac{\rho_k}{2}\bigl(\left(f_j(x_{k+1},\theta_k)\right)^2-\left(f_j(x_{k+1},\theta^*)\right)^2\bigr)
+
\lambda_k^{(j)}\bigl(f_j(x_{k+1},\theta_k)-
f_j(x_{k+1},\theta^*)\bigr)\right)+\Phi_{\rho_k}\bigl(x_{k+1},\lambda_k,\theta^*\bigr)- \Phi_{\rho_k}\bigl(x_{k+1},\lambda_k,\theta_k\bigr)$. Now using the facts from \eqref{eq:lemma5.1} and \eqref{eq:lemma5:eq5}, we can expand as follows
\begin{align*}
&\Phi_{\rho_k}\bigl(x_{k+1},\lambda_k,\theta^*\bigr)
 - \sum_{j=1}^J\lambda^{(j)}f_j(x_{k+1},\theta^*)
 - \left(\lambda_{k+1}-\lambda\right)^\top 
   \nabla_{\lambda}\Phi_{\rho_k}(x_{k+1},\lambda_k,\theta^*) \\
&\quad= \sum_{j\in J_k^+}
        \Bigl(\tfrac{\rho_k}{2}\bigl(f_j(x_{k+1},\theta^*)\bigr)^2
              + \lambda_k^{(j)} f_j(x_{k+1},\theta^*)\Bigr)
       - \sum_{j\in J_k^-}\tfrac{(\lambda_k^{(j)})^2}{2\rho_k} -\sum_{j\in J_k^+}
      \bigl(\lambda^{(j)}_{k+1}-\lambda^{(j)} \bigr)f_j(x_{k+1},\theta^*)\\
&\qquad- \sum_{j\in J_k^-}
     \bigl(\lambda^{(j)}_{k+1}-\lambda^{(j)} \bigr)\Bigl(-\tfrac{\lambda^{(j)}_k}{\rho_k} \Bigr) 
     - \sum_{j\in J_k^+}\lambda^{(j)}f_j(x_{k+1},\theta^*)
     - \sum_{j\in J_k^-}\lambda^{(j)}f_j(x_{k+1},\theta^*) +E_k.
\end{align*}
Next, by grouping the positive and negative index sets on the RHS separately, we get
\begin{align*}
       &=  \sum_{j\in J_k^+}
        \Bigl(\frac{\rho_k}{2}\bigl(f_j(x_{k+1},\theta^*)\bigr)^2
              + \lambda_k^{(j)} f_j(x_{k+1},\theta^*)\Bigr)-
      \sum_{j\in J_k^-}\frac{\bigl(\lambda_k^{(j)}\bigr)^2}{2\rho_k}- \sum_{j\in J_k^-}\lambda^{(j)}f_j(x_{k+1},\theta^*)\notag\\
       & \quad- \sum_{j\in J_k^+}\left(\lambda^{(j)}_k+\rho_k f_j(x_{k+1},\theta^*)-\lambda^{(j)} \right) f_j(x_{k+1},\theta^*) - \sum_{j\in J_k^-}\left(-\lambda^{(j)}\right)\left(-\frac{\lambda^{(j)}_k}{\rho_k} \right) -  \sum_{j\in J^+_k}\lambda^{(j)}f_j(x_{k+1},\theta^*)+ E_k.
\end{align*}
Now, observe that the terms inside the sum over $J_k^+$ can be combined, since both involve $f_j(x_{k+1},\theta^*)$. After cancellation we arrive at
\begin{align*}
          &= \sum_{j\in J_k^+}\Bigl[\tfrac{\rho_k}{2}\bigl(f_j(x_{k+1},\theta^*)\bigr)^2
              + \lambda_k^{(j)} f_j(x_{k+1},\theta^*)-\lambda^{(j)}f_j(x_{k+1},\theta^*) \notag\\
              &\quad -\left(\lambda^{(j)}_k+\rho_k f_j(x_{k+1},\theta^*)-\lambda^{(j)} \right)f_j(x_{k+1},\theta^*) \Bigr]  - \sum_{j\in J_k^-}\left(\tfrac{\bigl(\lambda_k^{(j)}\bigr)^2}{2\rho_k}+\lambda^{(j)}\Bigl(f_j(x_{k+1},\theta^*)+\tfrac{\lambda_k^{(j)}}{\rho_k}\Bigr)\right)+E_k.
\end{align*}
Finally, simplifying the quadratic terms in $f_j(x_{k+1},\theta^*)$, we can conclude that
\begin{align}\label{eq:lemma5.2}
&\Phi_{\rho_k}\bigl(x_{k+1},\lambda_k,\theta^*\bigr)
 - \sum_{j=1}^J\lambda^{(j)}f_j(x_{k+1},\theta^*)
 - \left(\lambda_{k+1}-\lambda\right)^\top 
   \nabla_{\lambda}\Phi_{\rho_k}(x_{k+1},\lambda_k,\theta^*) \notag\\
   & \quad  = - \sum_{j\in J_k^+}\tfrac{\rho_k}{2}\bigl(f_j(x_{k+1},\theta^*)\bigr)^2  - \sum_{j\in J_k^-}\left(\frac{\bigl(\lambda_k^{(j)}\bigr)^2}{2\rho_k}+\lambda^{(j)}\Bigl(f_j(x_{k+1},\theta^*)+\frac{\lambda_k^{(j)}}{\rho_k}\Bigr)\right) +E_k.
\end{align}
Note that, for any $\lambda^{(j)}\geq 0$ and $\forall j\in J_k^-$, we have $\Bigl(f_j(x_{k+1},\theta^*)+\frac{\lambda_k^{(j)}}{\rho_k}\Bigr)\leq 0$. Then from \eqref{eq:lemma5.2}, we have
\begin{align}\label{eq:lemma5.3}
&\Phi_{\rho_k}\bigl(x_{k+1},\lambda_k,\theta^*\bigr)- \sum_{j=1}^J\lambda^{(j)}f_j(x_{k+1},\theta^*)-\left(\lambda_{k+1}-\lambda\right)^\top \nabla_{\lambda}\Phi_{\rho_k}(x_{k+1},\lambda_k,\theta^*)\notag\\
     &\quad \geq -\left(\sum_{j\in J_k^+}\frac{\rho_k}{2}\bigl(f_j(x_{k+1},\theta^*\bigr)^2 +\sum_{j\in J_k^-}\frac{\bigl(\lambda_k^{(j)}\bigr)^2}{2\rho_k}\right) +E_k\notag\\
    & \quad \geq -\left(\frac{1}{2\rho_k}\|\lambda_{k+1}-\lambda_k\|^2
   +\frac{J \rho_k}{2} L^2_{f,\theta} \|\theta_k - \theta^*\|^2 \right) +E_k,
\end{align}
where the last inequality follows from \eqref{eq:eq_dual_1}. Therefore, we have
\begin{align}\label{eq:lemma5:eq4}
    -&\Phi_{\rho_k}\bigl(x_{k+1},\lambda_k,\theta^*\bigr)+ \sum_{j=1}^J\lambda^{(j)}f_j(x_{k+1},\theta^*) +\left(\lambda_{k+1}-\lambda\right)^\top \nabla_{\lambda}\Phi_{\rho_k}(x_{k+1},\lambda_k,\theta^*) \nonumber\\
    & \quad \leq 
    \left(\frac{1}{2\rho_k}\|\lambda_{k+1}-\lambda_k\|^2
   +\frac{J \rho_k}{2} L^2_{f,\theta} \|\theta_k - \theta^*\|^2 \right) -E_k.
\end{align}
Recall that $E_k=
\sum_{j\in J_k^+}
\left(
\frac{\rho_k}{2}\Bigl(f_j(x_{k+1},\theta_k)^2-f_j(x_{k+1},\theta^*)^2\Bigr)
+ \lambda_k^{(j)}\Bigl(f_j(x_{k+1},\theta_k)-f_j(x_{k+1},\theta^*)\Bigr)
\right) +\Phi_{\rho_k}(x_{k+1},\lambda_k,\theta^*)-\Phi_{\rho_k}(x_{k+1},\lambda_k,\theta_k).$
Using the triangle inequality and $a^2-b^2=(a-b)(a+b)$, we obtain
\begin{align}\label{eq:Ek-bound}
|E_k| &\leq \sum_{j\in J_k^+}
\frac{\rho_k}{2}
|f_j(x_{k+1},\theta_k)-f_j(x_{k+1},\theta^*)|
|f_j(x_{k+1},\theta_k)+f_j(x_{k+1},\theta^*)| \nonumber\\
& \qquad +\sum_{j\in J_k^+}
|\lambda_k^{(j)}|
|f_j(x_{k+1},\theta_k)-f_j(x_{k+1},\theta^*)| +
\left|
\Phi_{\rho_k}(x_{k+1},\lambda_k,\theta^*)-\Phi_{\rho_k}(x_{k+1},\lambda_k,\theta_k)
\right|.
\end{align}

Recall that $\Phi_\rho(x,\lambda,\theta)=\sum_{j=1}^J \phi_\rho(f_j(x,\theta),\lambda_j)$, moreover, one can observe that for any $u_j,u_j'\in\mathbb R$, $|\phi_\rho(u_j,v_j)-\phi_\rho(u_j',v_j)|
\le (|v_j|+\rho D_f)|u_j-u_j'|$. Therefore, by Lipschitz continuity of $f_j(x,\theta)$ in $\theta$ we conclude that for any $\theta,\theta'\in\Theta$,  $|\phi_\rho(f_j(x,\theta),\lambda^{(j)})-\phi_\rho(f_j(x,\theta'),\lambda^{(j)})|
\leq (|\lambda^{(j)}|+\rho D_f)L_{f,\theta}\|\theta-\theta'\|$ which summing over $j$ implies that 
\begin{align}\label{eq:lip-Phi-theta}
    |\Phi_\rho(x,\lambda,\theta)-\Phi_\rho(x,\lambda,\theta')|\leq
L_{f,\theta}(J\rho D_f+\sqrt{J}\|\lambda\|)\|\theta-\theta'\|.
\end{align}

Using the above inequality within \eqref{eq:Ek-bound} and by Lipschitz continuity of $f_j(x,\theta)$ in $\theta$ and the bound $|f_j(x,\theta)|\leq D_f$, we conclude that
\begin{align}\label{eq:lemma5:eq2.1a}
    |E_k| \leq 2L_{f,\theta}(J\rho_k D_f+\sqrt{J}\|\lambda_k\|)\|\theta_k-\theta^*\|.
\end{align}
Now using \eqref{eq:lemma5:eq3} and \eqref{eq:lemma5:eq2.1a} in \eqref{eq:lemma5:eq4}, we can show
\begin{align}\label{eq:lemma5:eq5b}
-\Phi_{\rho_k}\bigl(x_{k+1},\lambda_k,\theta^*\bigr)
+  \sum_{j=1}^J\lambda^{(j)}f_j(x_{k+1},\theta^*) &\leq
\frac{1}{2 \rho_k}\left( \|\lambda_k - \lambda\|^2 - \|\lambda_{k+1}-\lambda\|^2\right) + L^\Phi_{\lambda\theta}\|\lambda_{k+1}-\lambda\| \|\theta_k-\theta^*\| \notag\\
& \quad +\frac{J \rho_k}{2} L^2_{f,\theta} \|\theta_k - \theta^*\|^2 + 2L_{f,\theta}(J\rho_k D_f+\sqrt{J}\|\lambda_k\|)\|\theta_k-\theta^*\|.
\end{align}
\end{proof}

\section{Proof of Lemma \ref{lemma:lip_grad}}\label{sec:lem3}
\begin{proof}{Proof} Recall from Definition \ref{def:def_aug_lag} and \eqref{rem:rem6_prop2}, for any $x\in X$, we have
\begin{align*}
    \nabla_x \Phi_{\rho}(x,\lambda,\theta)
=
\sum_{j=1}^J [\rho f_j(x,\theta)+\lambda^{(j)}]_+\,\nabla_x f_j(x,\theta)
=
\nabla_x f(x,\theta)^\top[\rho f(x,\theta)+\lambda]_+.
\end{align*}
Fix $\rho>0$, $\lambda\in\mathbb R^J$, and $\theta\in\Theta$. Then for any $x,y\in X$, we have
\begin{align*}
 \nabla_x \Phi_{\rho}(x,\lambda,\theta)-\nabla_x \Phi_{\rho}(y,\lambda,\theta)
&= \nabla_x f(x,\theta)^\top[\rho f(x,\theta)+\lambda]_+
-\nabla_x f(y,\theta)^\top[\rho f(y,\theta)+\lambda]_+ \\
&= \big(\nabla_x f(x,\theta)-\nabla_x f(y,\theta)\big)^\top[\rho f(x,\theta)+\lambda]_+ \\
&\quad+ \nabla_x f(y,\theta)^\top\Big([\rho f(x,\theta)+\lambda]_+-[\rho f(y,\theta)+\lambda]_+\Big),
\end{align*}
where we added and subtracted $\nabla_x f(y,\theta)^\top[\rho f(x,\theta)+\lambda]_+$.
Taking norms and applying the triangle inequality yields
\begin{align}\label{eq:grad_lem1}
    \|\nabla_x \Phi_{\rho}(x,\lambda,\theta)-\nabla_x \Phi_{\rho}(y,\lambda,\theta)\| &\leq
\|\nabla_x f(x,\theta)-\nabla_x f(y,\theta)\|\,
\|[\rho f(x,\theta)+\lambda]_+\| \notag\\
&\quad+
\|\nabla_x f(y,\theta)\|\,
\|[\rho f(x,\theta)+\lambda]_+-[\rho f(y,\theta)+\lambda]_+\|.
\end{align}
Since $[\cdot]_+$ is the projection onto $\mathbb{R}^J_+$, it is non-expansive. Therefore,
\begin{align*}
\|f(x,\theta)-f(y,\theta)\|
= \Big(\sum_{j=1}^J |f_j(x,\theta)-f_j(y,\theta)|^2\Big)^{1/2}
\leq \sqrt{J}\,L_{f,x}\|x-y\|.
\end{align*}
Consequently, it follows that
\begin{align*}
    \|[\rho f(x,\theta)+\lambda]_+-[\rho f(y,\theta)+\lambda]_+\| \leq
\rho\sqrt{J}\,L_{f,x}\|x-y\|.
\end{align*}
Similarly, by Assumption \ref{assum:assum1}(iii), each $\nabla_x f_j(\cdot,\theta)$ is Lipschitz continuous in $x$ with constant $L_{\nabla f,x}$. Therefore, the Jacobian matrix satisfies
\begin{align*}
    \|\nabla_x f(x,\theta)-\nabla_x f(y,\theta)\| \leq
\sqrt{J}\,L_{\nabla f,x}\|x-y\|.
\end{align*}
Substituting these bounds into \eqref{eq:grad_lem1}, we obtain
\begin{align*}
    \|\nabla_x \Phi_{\rho}(x,\lambda,\theta)-\nabla_x \Phi_{\rho}(y,\lambda,\theta)\| &\leq
\sqrt{J}\,L_{\nabla f,x}\|x-y\|\,\|[\rho f(x,\theta)+\lambda]_+\| \\
&\quad + \rho\sqrt{J}\,L_{f,x}\|x-y\|\,\|\nabla_x f(y,\theta)\|.
\end{align*}
Now, since $X\times\Theta$ is compact and the mappings $(z,\theta)\mapsto f(z,\theta)$ and $(z,\theta)\mapsto \nabla_x f(z,\theta)$ are continuous, the quantities $D_f\triangleq \sup_{(z,\vartheta)\in X\times\Theta}\|[f(z,\vartheta)]_+\|<\infty,
\quad
M_{\nabla f}\triangleq \sup_{(z,\vartheta)\in X\times\Theta}\|\nabla_x f(z,\vartheta)\|<\infty,\ $ are well defined and finite. Moreover, $\|[\rho f(x,\theta)+\lambda]_+\|
\leq \rho\|[f(x,\theta)]_+\|+\|\lambda\|
\leq \rho D_f+\|\lambda\|,$ and $\|\nabla_x f(y,\theta)\|\leq M_{\nabla f}$. Hence, we get
\begin{align*}
    \|\nabla_x \Phi_{\rho}(x,\lambda,\theta)-\nabla_x \Phi_{\rho}(y,\lambda,\theta)\|
&\leq \sqrt{J}\,L_{\nabla f,x}(\rho D_f+\|\lambda\|)\|x-y\| %\\
%&\quad
+\rho\sqrt{J}\,L_{f,x}M_{\nabla f}\|x-y\|.
\end{align*}
Rearranging terms gives
\begin{align*}
    \|\nabla_x \Phi_{\rho}(x,\lambda,\theta)-\nabla_x \Phi_{\rho}(y,\lambda,\theta)\|
\leq \Big(\rho C_1+\|\lambda\|C_2\Big)\|x-y\|,
\end{align*}
where $C_1\triangleq \sqrt{J}\big(L_{\nabla f,x}D_f+L_{f,x}M_{\nabla f}\big),
\quad
C_2\triangleq \sqrt{J}\,L_{\nabla f,x}.$ Thus, $\nabla_x\Phi_\rho(\cdot,\lambda,\theta)$ is Lipschitz continuous on $X$. 
\end{proof}

\section{Proof of Lemma \ref{prop:prop_1}}\label{SecB}
\begin{proof}{Proof} Let $x\in X$ and $\lambda \in \mathbb{R}^J_+$ be arbitrary vectors. From Algorithm \ref{alg:alm_mvi} (line 5) we have:
\begin{align}\label{eq:primal-updt2}
    (x_{k+1}-x)^\top\Bigg( x_{k+1}-x_k + \gamma_k \Big(F(x_k,\theta_k) + r_k+ \mathbf{J} f(x_k,\theta_k) \Big[ \rho_kf(x_k,\theta_k)+\lambda_k\Big]_+ \Big)\Bigg) \leq0.
\end{align}
Considering \eqref{eq:primal-updt2}, to find a lower bound for $\left(x_{k+1}-x\right)^{\top} \Big(F\left(x_k,\theta_k\right) + r_k \Big)$, we add and subtract $(x_{k+1}-x)^\top F(x_{k+1},\theta_{k+1})$, obtaining:
\begin{align*}
    &\left(x_{k+1}-x\right)^{\top} \Big(F\left(x_k,\theta_k\right) + r_k \Big) \pm (x_{k+1}-x)^\top F(x_{k+1},\theta_{k+1})\\
    & \quad = \langle r_k, x_k-x\rangle - \langle r_{k+1}, x_{k+1}-x\rangle + \langle r_k, x_{k+1}-x_k\rangle + \langle F(x_{k+1},\theta_{k+1}),x_{k+1}-x\rangle\\
    & \quad =\left(x_{k+1}-x\right)^{\top} \Big(F(x_{k+1},\theta^*) + [F(x_{k+1},\theta_{k+1})-F(x_{k+1},\theta^*)]  \Big) + \langle r_k, x_k-x\rangle \\
    & \qquad - \langle r_{k+1}, x_{k+1}-x\rangle + \langle r_k, x_{k+1}-x_k\rangle\\
    & \quad \geq \left(x_{k+1}-x\right)^{\top} F(x,\theta^*) +  \left(x_{k+1}-x\right)^{\top}[F(x_{k+1},\theta_{k+1})-F(x_{k+1},\theta^*)] +  \langle r_k, x_k-x\rangle\\
    &\qquad - \langle r_{k+1}, x_{k+1}-x\rangle + \langle r_k, x_{k+1}-x_k\rangle,
\end{align*}
where in the last inequality, we use the monotonicity of $F(\cdot,\theta^*)$. Next, using Cauchy-Schwarz inequality and Lipschitz continuity of $F(x,\cdot)$ with respect to $\theta$, we can show:
\begin{align}\label{eq:prop-p1}
    &\left(x_{k+1}-x\right)^{\top} \left(F (x_k,\theta_k) +r_k \right) \nonumber\\
    & \quad \geq \left(x_{k+1}-x\right)^{\top} F(x,\theta^*) - \Big(L_{F,x}\|x_k - x_{k-1}\| + L_{F,\theta}\|\theta_k - \theta_{k-1} \| \Big)\|x_{k+1}-x_k\| \nonumber\\
    & \qquad - L_{F,\theta}\|\theta_{k+1} - \theta^* \| \|x_{k+1}-x\| + \langle r_k, x_k-x\rangle - \langle r_{k+1}, x_{k+1}-x\rangle
\end{align}
Taking into account \ref{rem:rem6_prop2}, by adding and subtracting $\nabla_x\Phi_{\rho_k}(x_k,\lambda_k,\theta^*)$, we can obtain the following.
\begin{align*}
    &(x_{k+1} - x)^\top \mathbf{J} f(x_k,\theta_k) \Big[\rho_k f(x_k,\theta_k)+\lambda_k \Big]_+\\
    & \quad = (x_{k+1}-x_k)^\top \nabla_x \Phi_{\rho_k}(x_k,\lambda_k,\theta_k) + (x_{k}-x)^\top \nabla_x \Phi_{\rho_k}(x_k,\lambda_k,\theta_k)\\
    & \quad = (x_{k+1}-x_k)^\top \Big(\nabla_x \Phi_{\rho_k}(x_k,\lambda_k,\theta^*) +[\nabla_x \Phi_{\rho_k}(x_k,\lambda_k,\theta_k) - \nabla_x \Phi_{\rho_k}(x_k,\lambda_k,\theta^*)] \Big) \\
    & \qquad + (x_{k}-x)^\top \Big(\nabla_x \Phi_{\rho_k}(x_k,\lambda_k,\theta^*) +[\nabla_x \Phi_{\rho_k}(x_k,\lambda_k,\theta_k) - \nabla_x \Phi_{\rho_k}(x_k,\lambda_k,\theta^*)] \Big)\\
    & \quad \geq  (x_{k+1}-x_k)^\top \nabla_x \Phi_{\rho_k}(x_k,\lambda_k,\theta^*) + (x_{k+1}-x_k)^\top[\nabla_x \Phi_{\rho_k}(x_k,\lambda_k,\theta_k) - \nabla_x \Phi_{\rho_k}(x_k,\lambda_k,\theta^*)]\\
    & \qquad + \Phi_{\rho_k}(x_k,\lambda_k,\theta^*) - \Phi_{\rho_k}(x,\lambda_k,\theta^*) + (x_k-x)^\top[\nabla_x \Phi_{\rho_k}(x_k,\lambda_k,\theta_k) - \nabla_x \Phi_{\rho_k}(x_k,\lambda_k,\theta^*)],
\end{align*}
where in the last inequality, we use the convexity of $\Phi_{\rho_k}(\cdot,\lambda_k,\theta^*)$. Next, using the Cauchy-Schwarz inequality and Lipschitz continuity of $\nabla_x\Phi_{\rho_k}(x,\lambda,\cdot)$ with respect $\theta$ with a constant $L^{\Phi}_{x\theta}$, we can show
\begin{align}\label{eq:prop-p2}
    &(x_{k+1} - x)^\top \mathbf{J} f(x_k,\theta_k) \Big[\rho_k f(x_k,\theta_k)+\lambda_k \Big]_+ \geq (x_{k+1}-x_k)^\top \nabla_x \Phi_{\rho_k}(x_k,\lambda_k,\theta^*)\nonumber\\
    & \quad  + \Phi_{\rho_k}(x_k,\lambda_k,\theta^*) - \Phi_{\rho_k}(x,\lambda_k,\theta^*) - L^{\Phi}_{x\theta}\|\theta_k - \theta^* \| \Big(\|x_{k+1}-x_k \| + \|x_k - x \| \Big).
\end{align}
Moreover, using the three-point equality, we can write:
\begin{align}\label{eq:prop-p3}
    (x_{k+1}-x)^\top(x_{k+1}-x_k) = \frac{1}{2}\Big(\|x_{k+1} -x \|^2 - \|x_{k} -x \|^2 + \|x_{k+1} -x_k \|^2\Big).
\end{align}
Using \eqref{eq:prop-p1}, \eqref{eq:prop-p2}, and \eqref{eq:prop-p3} in \eqref{eq:primal-updt2}, we have:
\begin{align}\label{eq:prop-p4}
    &\left(x_{k
    +1}-x\right)^{\top} F\left(x,\theta^*\right) + \Phi_{\rho_k}(x_k,\lambda_k,\theta^*) - \Phi_{\rho_k}(x,\lambda_k,\theta^*)\nonumber\\
    & \quad \leq \frac{1}{2\gamma_k}\Big(\|x_{k} -x \|^2 - \|x_{k+1} -x \|^2 - \|x_{k+1} -x_k \|^2\Big) - (x_{k+1}-x_k)^\top \nabla_x \Phi_{\rho_k}(x_k,\lambda_k,\theta^*)  \notag\\
    & \qquad  + L_{F,\theta}\| \theta_{k+1}-\theta^*\| \|x_{k+1}-x\| + L^{\Phi}_{x \theta}\|\theta_k - \theta^*\| \Big(\|x_{k+1}-x_k\| + \|x_k - x\| \Big)\nonumber\\
    & \qquad + \|x_{k+1}-x_k\| \Big( L_{F,x}\|x_k - x_{k-1}\| + L_{F,\theta}\|\theta_{k}-\theta_{k-1}\|\Big) + \langle r_{k+1}, x_{k+1}-x\rangle - \langle r_k, x_k-x\rangle.
\end{align}
Next, using the Lemma \ref{lemma:lip_grad}, we have $\Phi_{\rho_k}(x_{k+1},\lambda_k,\theta^*)\leq \Phi_{\rho_k}(x_{k},\lambda_k,\theta^*) + \langle \nabla_x \Phi_{\rho_k}(x_k,\lambda_k,\theta^*),x_{k+1}-x_k\rangle + \frac{\rho_kC_1+\|\lambda_k\|C_2}{2}\|x_{k+1}-x_k\|^2$. Substituting this estimate into \eqref{eq:prop-p4} yields
\begin{align}\label{eq:prop-p5}
    &\left(x_{k
    +1}-x\right)^{\top} F\left(x,\theta^*\right) + \Phi_{\rho_{k}}(x_{k+1},\lambda_k,\theta^*) - \Phi_{\rho_k}(x,\lambda_k,\theta^*)\nonumber\\
    & \quad \leq \frac{1}{2\gamma_k}\Big(\|x_{k} -x \|^2 - \|x_{k+1} -x \|^2 - \|x_{k+1} -x_k \|^2\Big) + \langle r_{k+1}, x_{k+1}-x\rangle - \langle r_k, x_k-x\rangle \nonumber\\
    & \qquad
    + L_{F,\theta}\| \theta_{k+1}-\theta^*\| \|x_{k+1}-x\| + L^{\Phi}_{x \theta}\|\theta_k - \theta^*\| \Big(\|x_{k+1}-x_k\| + \|x_k - x\| \Big) \nonumber\\
    & \qquad + \frac{\rho_kC_1+\|\lambda_k\|C_2}{2}\|x_{k+1}-x_k\|^2 + \|x_{k+1}-x_k\| \Big( L_{F,x}\|x_k - x_{k-1}\| + L_{F,\theta}\|\theta_{k}-\theta_{k-1}\|\Big).
\end{align}
To treat the term involving $\|\lambda_k\|$, we first add and subtract $\lambda^*$ inside the norm and use the triangle inequality to obtain
\begin{align}\label{eq:lemma4a}
\frac{\rho_kC_1+\|\lambda_k\|C_2}{2}\|x_{k+1}-x_k\|^2&= \left(\frac{\rho_k C_1+\|\lambda^*+(\lambda_k-\lambda^*)\|C_2}{2}\right)\|x_{k+1}-x_k\|^2 \notag\\
    & \leq \frac{\rho_k C_1+\|\lambda^*\|C_2}{2}\|x_{k+1}-x_k\|^2 + \frac{C_2}{2} \|\lambda_k-\lambda^*\| \|x_{k+1}-x_k\|^2.
\end{align}
Using \eqref{eq:lemma5:eq5b} from Lemma \eqref{lemma:lemma2} and substituting \eqref{eq:lemma4a} into \eqref{eq:prop-p5}, we obtain
\begin{align}\label{eq:prop-p6}
    &\left(x_{k+1}-x\right)^{\top} F\left(x,\theta^*\right) + \sum_{j=1}^J\lambda^{(j)}f_j(x_{k+1},\theta^*) - \Phi_{\rho_k}(x,\lambda_k,\theta^*)\nonumber\\
    & \quad \leq \frac{1}{2\gamma_k}\Big(\|x_{k} -x \|^2 - \|x_{k+1} -x \|^2 - \|x_{k+1} -x_k \|^2\Big) + \langle r_{k+1}, x_{k+1}-x\rangle - \langle r_k, x_k-x\rangle  +\frac{J \rho_k}{2} L^2_{f,\theta} \|\theta_k - \theta^*\|^2 \nonumber\\
    & \qquad + L_{F,\theta}\| \theta_{k+1}-\theta^*\| \|x_{k+1}-x\| + L^{\Phi}_{x \theta}\|\theta_k - \theta^*\| \Big(\|x_{k+1}-x_k\| + \|x_k - x\| \Big) +L^\Phi_{\lambda\theta}\|\lambda_{k+1}-\lambda\| \|\theta_k-\theta^*\|   \nonumber\\
    & \qquad +\|x_{k+1}-x_k\| \Big( L_{F,x}\|x_k - x_{k-1}\| + L_{F,\theta}\|\theta_{k}-\theta_{k-1}\|\Big) + \frac{1}{2\rho_k}\Big(\|\lambda_k-\lambda \|^2  - \| \lambda_{k+1}-\lambda\|^2 \Big)  \nonumber\\
    & \qquad  +  \frac{\rho_k C_1+ \|\lambda^*\|C_2}{2}\|x_{k+1}-x_k\|^2 + \frac{C_2}{2} \|\lambda_k-\lambda^*\| \|x_{k+1}-x_k\|^2 +  2L_{f,\theta}(J\rho_k D_f+\sqrt{J}\|\lambda_k\|)\|\theta_k-\theta^*\|.
\end{align}
Note that applying Young’s inequality to the term $\|x_{k+1}-x_k\| \Big( L_{F,x}\|x_k - x_{k-1}\| + L_{F,\theta}\|\theta_{k}-\theta_{k-1}\|\Big)$, we have
\begin{align*}
    &\|x_{k+1}-x_k\| \Big( L_{F,x}\|x_k - x_{k-1}\| + L_{F,\theta}\|\theta_{k}-\theta_{k-1}\|\Big)\\
    & \quad\leq \frac{L_{F,x}}{2} \|x_{k+1}-x_k\|^2 + \frac{L_{F,x}}{2} \|x_k - x_{k-1}\|^2  +\frac{L_{F,\theta}}{2} \|x_{k+1}-x_k\|^2 + \frac{L_{F,\theta}}{2} \|\theta_k - \theta_{k-1}\|^2 \notag \\
    & \quad =\left(\frac{L_{F,x}+ L_{F,\theta}}{2} \right) \|x_{k+1}-x_k\|^2 + \frac{L_{F,x}}{2} \|x_k - x_{k-1}\|^2 +  \frac{L_{F,\theta}}{2} \|\theta_k - \theta_{k-1}\|^2.
\end{align*}
Now let us add and subtract $\frac{L_{F,x}}{2} \|x_{k+1} - x_k\|^2$ and $\frac{L_{F,\theta}}{2} \|\theta_{k+1} - \theta_k\|^2$ with the respective terms to obtain
\begin{align*}
    &\left(\frac{L_{F,x}+ L_{F,\theta}}{2} \right) \|x_{k+1}-x_k\|^2 + \frac{L_{F,x}}{2} \|x_k - x_{k-1}\|^2 +  \frac{L_{F,\theta}}{2} \|\theta_k - \theta_{k-1}\|^2 \notag\\
    &\quad = \left(\frac{2 L_{F,x}+ L_{F,\theta}}{2} \right) \|x_{k+1} - x_k\|^2  +\frac{L_{F,\theta}}{2} \|\theta_{k+1} - \theta_k\|^2 \notag \\
    &
\qquad +  \frac{L_{F,x}}{2} \left( \|x_k - x_{k-1}\|^2 - \|x_{k+1} - x_k\|^2 \right) + \frac{L_{F,\theta}}{2} \left( \|\theta_k - \theta_{k-1}\|^2 - \|\theta_{k+1} - \theta_k\|^2 \right).
\end{align*}
Putting all terms together in \eqref{eq:prop-p6}, we get
\begin{align}\label{eq:prop-p7}
    &\left(x_{k+1}-x\right)^{\top} F\left(x,\theta^*\right) + \sum_{j=1}^J\lambda^{(j)}f_j(x_{k+1},\theta^*) - \Phi_{\rho_k}(x,\lambda_k,\theta^*) & \notag\\
    & \quad \leq \frac{1}{2\gamma_k}\Big(\|x_k - x\|^2 - \|x_{k+1} - x\|^2\Big) + \frac{1}{2\rho_k}\Big(\|\lambda_k-\lambda\|^2 - \| \lambda_{k+1}-\lambda\|^2 \Big)  +\frac{J \rho_k}{2} L^2_{f,\theta} \|\theta_k - \theta^*\|^2\notag \\
    &\qquad +\left(\frac{\rho_k C_1 +\|\lambda^*\|C_2+2 L_{F,x}+ L_{F,\theta}}{2}  - \frac{1}{2\gamma_k} \right) \|x_{k+1} - x_k\|^2 +\frac{C_2}{2} \|\lambda_k-\lambda^*\| \|x_{k+1}-x_k\|^2 +\frac{L_{F,\theta}}{2} \|\theta_{k+1} - \theta_k\|^2 \notag\\
&\qquad + \langle r_{k+1}, x_{k+1}-x\rangle - \langle r_k, x_k-x\rangle  + L_{F,\theta}\| \theta_{k+1}-\theta^*\| \|x_{k+1}-x\| + L^{\Phi}_{x \theta}\|\theta_k - \theta^*\| \Big(\|x_{k+1}-x_k\| + \|x_k - x\| \Big) \notag \\
&\qquad + \frac{L_{F,x}}{2} \left( \|x_k - x_{k-1}\|^2 - \|x_{k+1} - x_k\|^2 \right) + \frac{L_{F,\theta}}{2} \left( \|\theta_k - \theta_{k-1}\|^2 - \|\theta_{k+1} - \theta_k\|^2 \right) \notag \\
&\qquad + L_{\lambda \theta}^{\Phi}\|\lambda_{k+1}-\lambda\|\|\theta_k-\theta^*\| +  2L_{f,\theta}(J\rho_k D_f+\sqrt{J}\|\lambda_k\|)\|\theta_k-\theta^*\|.
\end{align}
\end{proof}

\section{Proof of Lemma \ref{prop:prop2}}\label{SecC}
\begin{proof}{Proof} From \eqref{eq:prop-p7}, setting $(x,\lambda)=(x^*, \lambda^*)$ gives, for all $k\geq 0$
\begin{align}\label{eq:prop2.1}
    & 0 \leq \frac{1}{2\gamma_k}\Big(\|x_k - x^*\|^2 - \|x_{k+1} - x^*\|^2\Big) + \frac{1}{2\rho_k}\Big(\|\lambda_k-\lambda^*\|^2 - \| \lambda_{k+1}-\lambda^*\|^2 \Big)  +\frac{L_{F,\theta}}{2} \|\theta_{k+1} - \theta_k\|^2\notag \\
    &\qquad + \left(\frac{\rho_k C_1 +\|\lambda^*\|C_2+2 L_{F,x}+ L_{F,\theta}}{2}  - \frac{1}{2\gamma_k} \right) \|x_{k+1} - x_k\|^2 +\frac{C_2}{2} \|\lambda_k-\lambda^*\| \|x_{k+1}-x_k\|^2 - \langle r_k, x_k-x^*\rangle  \notag\\
&\qquad + \langle r_{k+1}, x_{k+1}-x^*\rangle  + L_{F,\theta}\| \theta_{k+1}-\theta^*\| \|x_{k+1}-x^*\| + L^{\Phi}_{x \theta}\|\theta_k - \theta^*\| \Big(\|x_{k+1}-x_k\| + \|x_k - x^*\| \Big) \notag \\
&\qquad + \frac{L_{F,x}}{2} \left( \|x_k - x_{k-1}\|^2 - \|x_{k+1} - x_k\|^2 \right) + \frac{L_{F,\theta}}{2} \left( \|\theta_k - \theta_{k-1}\|^2 - \|\theta_{k+1} - \theta_k\|^2 \right) \notag \\
&\qquad + L_{\lambda \theta}^{\Phi}\|\lambda_{k+1}-\lambda^*\|\|\theta_k-\theta^*\| +\frac{J \rho_k}{2} L^2_{f,\theta} \|\theta_k - \theta^*\|^2 +  2L_{f,\theta}(J\rho_k D_f+\sqrt{J}\|\lambda_k\|)\|\theta_k-\theta^*\|.
\end{align}
To bound the cross term $L_{\lambda \theta}^{\Phi}\|\lambda_{k+1}-\lambda^*\|\|\theta_k-\theta^*\|$, we apply Young’s inequality for any $\alpha_k>0$
\begin{align*}     L_{\lambda\theta}^{\Phi}\|\lambda_{k+1}-\lambda^*\| \|\theta_k-\theta^*\| \leq \frac{\alpha_k L^\Phi_{\lambda \theta}}{2} \|\lambda_{k+1}-\lambda^*\|^2 + \frac{L^\Phi_{\lambda \theta}}{2\alpha_k }  \|\theta_k-\theta^*\|^2.
\end{align*}
Using the above inequality in \eqref{eq:prop2.1}, multiplying both sides by $2\rho_k$ and rearranging the terms, we obtain
\begin{align}\label{lambda bound}
    \left(1-\rho_k L^\Phi_{\lambda\theta} \alpha_k  \right)  \|\lambda_{k+1}-\lambda^*\|^2 & \leq  \|\lambda_k-\lambda^*\|^2 + \rho_k\left(\rho_k C_1 +\|\lambda^*\|C_2+2 L_{F,x}+ L_{F,\theta}  - \frac{1}{\gamma_k} \right) \|x_{k+1} - x_k\|^2 \notag\\
    & \qquad  + \rho_k C_2 \|\lambda_k-\lambda^*\| \|x_{k+1}-x_k\|^2 + 2 \rho_k \left(T_k + R_k\right),
\end{align}
where 
\begin{align*}
T_k &\triangleq \tfrac{1}{2\gamma_k}\left(\|x_k-x^*\|^2-\|x_{k+1}-x^*\|^2\right)
+ \tfrac{L_{F,x}}{2} \left( \|x_k - x_{k-1}\|^2 - \|x_{k+1} - x_k\|^2 \right)
+ \langle r_{k+1},x_{k+1}-x^*\rangle  -\langle r_k,x_k-x^*\rangle,\\
R_k &\triangleq L_{F,\theta}\|\theta_{k+1}-\theta^*\| \|x_{k+1}-x^*\|
+ L^{\Phi}_{x\theta}\|\theta_k - \theta^*\| \big(\|x_{k+1}-x_k\| + \|x_k - x^*\| \big) + \tfrac{L_{F,\theta}}{2} \|\theta_k - \theta_{k-1}\|^2 \\
& \quad + \tfrac{L^\Phi_{\lambda\theta}}{2\alpha_k}\|\theta_k-\theta^*\|^2
+\frac{J \rho_k}{2} L^2_{f,\theta} \|\theta_k - \theta^*\|^2 +  2L_{f,\theta}(J\rho_k D_f+\sqrt{J}\|\lambda_k\|)\|\theta_k-\theta^*\|.
\end{align*}
Assume now that $0<\rho_k L_{\lambda\theta}^{\Phi} \alpha_k < 1$. Then, dividing both sides of \eqref{lambda bound} by $ \left(1-  \rho_k L^\Phi_{\lambda\theta}  \alpha_k  \right)$ we get
\begin{align}\label{eq:prop2.2.1}
 \|\lambda_{k+1}-\lambda^*\|^2 & \leq \frac{1}{    \left(1-\rho_k L^\Phi_{\lambda\theta} \alpha_k  \right)}  \|\lambda_k-\lambda^*\|^2 + \frac{\rho_k}{\left(1-\rho_k L^\Phi_{\lambda\theta} \alpha_k  \right)} C_2 \|\lambda_k-\lambda^*\| \|x_{k+1}-x_k\|^2  \notag\\
    & \qquad  + \frac{\rho_k}{\left(1-\rho_k L^\Phi_{\lambda\theta} \alpha_k  \right)}\left(\rho_k C_1 +\|\lambda^*\|C_2+2 L_{F,x}+ L_{F,\theta}  - \frac{1}{\gamma_k} \right) \|x_{k+1} - x_k\|^2 \notag\\
    & \qquad + \frac{2\rho_k}{\left(1-\rho_k L^\Phi_{\lambda\theta} \alpha_k  \right)} \left(T_k + R_k\right).
\end{align}
We now provide upper bounds the auxiliary terms $T_k$ and $R_k$. For $T_k$ summing from $k=0, \dots, K-1$, using $x_{-1}=x_0$ and $r_0=0$, and $\gamma_k=\gamma$ we get
\begin{align}\label{eq:prop2.3}
\sum_{k=0}^{K-1}T_k
&=\frac1{2\gamma}\Bigl(\|x_0-x^*\|^2-\|x_K-x^*\|^2\Bigr) +\frac{L_{F,x}}2\bigl(\|x_0 - x_{-1}\|^2 - \|x_K - x_{K-1}\|^2\bigr) \notag\\
&\quad +\sum_{k=0}^{K-1}\Bigl(\Bigl\langle r_{k+1},x_{k+1}-x^*\Bigr\rangle-\Bigl\langle r_k,x_k-x^*\Bigr\rangle \Bigr) \notag\\
&= \frac1{2\gamma}\Bigl(\|x_0-x^*\|^2-\|x_K-x^*\|^2\Bigr) +\frac{L_{F,x}}2\bigl(\|x_0 - x_{-1}\|^2 - \|x_K - x_{K-1}\|^2\bigr)+ \Bigl\langle r_K,x_K-x^*\Bigr\rangle.
\end{align}
By Lipschitzness of $F$ in $(x,\theta)$ and recalling that
$
  r_K=F(x_K,\theta_K)-F(x_{K-1},\theta_{K-1})$, one can easily show that $ \|r_K\|\le L_{F,x}\|x_K-x_{K-1}\|+L_{F,\theta}\|\theta_K-\theta_{K-1}\|.$
Hence, by Cauchy–Schwarz and Young’s inequality
\begin{align*}
     \langle r_K,x_K-x^*\rangle & \leq\left( L_{F,x}\|x_K-x_{K-1}\|+L_{F,\theta}\|\theta_K- \theta_{K-1}\|\right) \|x_K-x^*\|\\
   &\leq \tfrac{L_{F,x}}{2}\|x_K-x_{K-1}\|^2
      +\tfrac{L_{F,x}}{2}\|x_K-x^*\|^2
      +\tfrac{L_{F,\theta}}{2}\|\theta_K-\theta_{K-1}\|^2
      +\tfrac{L_{F,\theta}}2\|x_K-x^*\|^2 \\
     &=  \tfrac{L_{F,x}}{2}\|x_K-x_{K-1}\|^2
      +\tfrac{L_{F,\theta}}{2}\|\theta_K-\theta_{K-1}\|^2
      +\tfrac{L_{F,x}+L_{F,\theta}}{2}\|x_K-x^*\|^2 .
\end{align*}
From the definition of $T_k$ and telescoping with $x_{-1}=x_0$ and $r_0=0$ in \eqref{eq:prop2.3}, we have
\begin{align} \label{eq:prop2.3.1}
    \sum_{k=0}^{K-1}T_k &\leq 
       \tfrac{1}{2\gamma}\|x_0-x^*\|^2 +\left( \tfrac{L_{F,x}+L_{F,\theta}}{2}-\tfrac{1}{2\gamma} \right) \|x_K-x^*\|^2 +\tfrac{L_{F,\theta}}{2}\|\theta_K-\theta_{K-1}\|^2.
\end{align}
Choosing $\gamma \in\Big(0,(L_{F,x}+L_{F,\theta})^{-1}\Big)$ the coefficient of $\|x_K-x^*\|^2$ on the right-hand side of \eqref{eq:prop2.3.1} is non-positive and may be discarded. Hence,  $\displaystyle \sum_{k=0}^{K-1}T_k \le \frac{1}{2\gamma} \|x_0-x^*\|^2 + \frac{L_{F,\theta}}{2} \|\theta_K-\theta_{K-1}\|^2.$
By the compactness of $\Theta$, $\|\theta_K-\theta_{K-1}\|\le\|\theta_K-\theta^*\|+\|\theta_{K-1}-\theta^*\|\le 2D_{\Theta}$, it follows that
\begin{align*}
     \sum_{k=0}^{K-1}T_k  \leq \frac{1}{2\gamma} \|x_0-x^*\|^2 + 2L_{F,\theta}D_{\Theta}^2 =: C_T,
\end{align*}
where $C_T<\infty$ is independent of $K$. Therefore, for every $K\ge 1$, $\sum_{k=0}^{K-1}T_k\le C_T.$

Next, we estimate $R_k$. Again, by the compactness of $X$, $\|x_{k+1}-x^*\|\leq D_{x}$ and $\|x_{k+1}-x_k\|\leq \|x_{k+1}-x^*\|+\|x_k-x^*\|\leq 2D_{x}$. Therefore, for every $k\geq 0$,
\begin{align}\label{eq:prop2.4}
    R_k
&\leq L_{F,\theta}D_{X}\|\theta_{k+1}-\theta^*\|
+L_{x\theta}^{\Phi}\|\theta_k-\theta^*\|\big(2D_{X}+D_{X}\big)
+\frac{L_{F,\theta}}{2}\|\theta_k-\theta_{k-1}\|^2 \notag\\
&\quad +\frac{L_{\lambda\theta}^{\Phi}}{2\alpha_k}\|\theta_k-\theta^*\|^2
+\frac{J\rho_k}{2}L_{f,\theta}^2\|\theta_k-\theta^*\|^2
+2L_{f,\theta}\big(J\rho_k D_f+\sqrt{J}\|\lambda_k\|\big)\|\theta_k-\theta^*\| \notag\\
&\le L_{F,\theta}D_{X}\|\theta_{k+1}-\theta^*\|
+\Big(3L_{x\theta}^{\Phi}D_{X}+2J\rho_k D_fL_{f,\theta}\Big)\|\theta_k-\theta^*\|
+\frac{L_{F,\theta}}{2}\|\theta_k-\theta_{k-1}\|^2 \notag\\
&\quad +\left(\frac{L_{\lambda\theta}^{\Phi}}{2\alpha_k}+\frac{J\rho_k}{2}L_{f,\theta}^2\right)\|\theta_k-\theta^*\|^2
+2\sqrt{J}L_{f,\theta}\|\lambda_k\|\|\theta_k-\theta^*\|.
\end{align}
Using $\|\theta_k-\theta_{k-1}\|^2\le 2\|\theta_k-\theta^*\|^2+2\|\theta_{k-1}-\theta^*\|^2,$ we further obtain
\begin{align}\label{eq:prop2.4b}
R_k
&\le L_{F,\theta}D_{X}\|\theta_{k+1}-\theta^*\|
+\Big(3L_{x\theta}^{\Phi}D_{X}+2J\rho_k D_fL_{f,\theta}\Big)\|\theta_k-\theta^*\| \notag\\
&\quad +L_{F,\theta}\big(\|\theta_k-\theta^*\|^2+\|\theta_{k-1}-\theta^*\|^2\big)
+\left(\frac{L_{\lambda\theta}^{\Phi}}{2\alpha_k}+\frac{J\rho_k}{2}L_{f,\theta}^2\right)\|\theta_k-\theta^*\|^2 \notag\\
&\quad +2\sqrt{J}L_{f,\theta}\|\lambda_k\|\,\|\theta_k-\theta^*\|.
\end{align}
From the induction hypothesis, we have that $\|\lambda_k-\lambda^*\|\leq D_\Lambda$, $k=0,1,\dots,K-1$, hence, by the triangle inequality, $\|\lambda_k\|\leq \|\lambda_k-\lambda^*\|+\|\lambda^*\|
\leq D_\Lambda+\|\lambda^*\|$, $k=0,1,\dots,K-1.$ Therefore, for $k=0,1,\dots,K-1$, \eqref{eq:prop2.4b} implies
\begin{align}\label{eq:prop2.4c}
R_k
&\leq L_{F,\theta}D_{X}\|\theta_{k+1}-\theta^*\|
+\Big(3L_{x\theta}^{\Phi}D_{X}+2J\rho D_fL_{f,\theta}
+2\sqrt{J}L_{f,\theta}(D_\Lambda+\|\lambda^*\|)\Big)\|\theta_k-\theta^*\| \notag\\
&\quad +L_{F,\theta}\big(\|\theta_k-\theta^*\|^2+\|\theta_{k-1}-\theta^*\|^2\big)
+\left(\frac{L_{\lambda\theta}^{\Phi}}{2\alpha_k}+\frac{J\rho}{2}L_{f,\theta}^2\right)\|\theta_k-\theta^*\|^2.
\end{align}
Summing from $k=0$ to $(K-1)$ and noting that $\theta_{-1}=\theta_0$,
 we obtain
 \begin{align*}
      \sum_{k=0}^{K-1}R_k &\leq L_{F,\theta}D_{X}\sum_{k=1}^{K}\|\theta_k-\theta^*\| +\Big(3L_{x\theta}^{\Phi}D_{X}+2J\rho D_fL_{f,\theta}
+2\sqrt{J}L_{f,\theta}(D_\Lambda+\|\lambda^*\|)\Big)\sum_{k=0}^{K-1}\|\theta_k-\theta^*\| \\
&\quad +L_{F,\theta}\Big(\|\theta_0-\theta^*\|^2+2\sum_{k=0}^{K-1}\|\theta_k-\theta^*\|^2\Big)
+\sum_{k=0}^{K-1}\left(\frac{L_{\lambda\theta}^{\Phi}}{2\alpha_k}+\frac{J\rho}{2}L_{f,\theta}^2\right)\|\theta_k-\theta^*\|^2 =: C_R(D_\Lambda).
\end{align*}
Since $\alpha_k= \frac{1}{(k+2)^2}\in (0,1)$, 
by the learning–error summability assumption, the right-hand side is finite for each $K$, and thus $\sum_{k=0}^{K-1}R_k \leq C_R(D_\Lambda).$

Now define $\delta_k= \frac{1}{1-\rho_kL_{\lambda \theta}^{\Phi}\alpha_k}$, then \eqref{eq:prop2.2.1}, becomes
\begin{align}\label{eq:prop5a}
\| \lambda_{k+1}-\lambda^*\|^2
&\leq
\delta_k \| \lambda_k-\lambda^*\|^2
+\delta_k \rho_k C_2 \|\lambda_k-\lambda^*\| \|x_{k+1}-x_k\|^2 \notag\\
& \qquad
+\delta_k\rho_k
\left(\rho_k C_1+\| \lambda^*\|C_2+2L_{F,x}+L_{F,\theta}-\frac1{\gamma}\right)\|x_{k+1}-x_k\|^2
+2\delta_k\rho_k(T_k+R_k).
\end{align}
Next, define the sequence $\{t_k\}_{k\geq 0}$ such that $t_0=1$ and $ t_{k+1}=\frac{t_k}{\delta_k} \text{ for }k\ge 0$. Based on the parameter selections $\rho_k=\rho=\frac{1}{L_{\lambda \theta}^\Phi}$ and $\alpha_k=\frac{1}{(k+2)^2}$, we conclude that $t_k=\frac{t_0(k+2)}{2(k+1)}$ which implies that $t_k\in [\frac{1}{2},1]$ for any $k\geq 0$. Multiplying both sides of \eqref{eq:prop5a} by $t_{k+1}$, we obtain
\begin{align}\label{eq:eq:prop5b}
    t_{k+1}\|\lambda_{k+1}-\lambda^*\|^2
&\leq
t_k\|\lambda_k-\lambda^*\|^2
+\rho_k  t_k C_2 \|\lambda_k-\lambda^*\| \|x_{k+1}-x_k\|^2 \notag\\
& \qquad  + \rho_k t_k
\left(\rho_k C_1+\|\lambda^*\|C_2+2L_{F,x}+L_{F,\theta}-\frac1{\gamma}\right)\|x_{k+1}-x_k\|^2
+2\rho_k t_{k}(T_k+R_k).
\end{align}
We prove the boundedness of the dual iterates by induction, showing that there exists $D_\Lambda>0$ such that $\|\lambda_k-\lambda^*\|\leq D_\Lambda$ for all $k\ge 0.$ Assume, as the induction hypothesis, that $\|\lambda_k-\lambda^*\|\leq D_\Lambda \quad\text{for all }k=0,1,\dots,K-1.$ Hence, we get from \eqref{eq:eq:prop5b}
\begin{align}\label{eq:eq:prop5c}
    t_{k+1}\|\lambda_{k+1}-\lambda^*\|^2
&\leq
t_k\|\lambda_k-\lambda^*\|^2
+\rho_k t_k\left(\rho_k C_1+\|\lambda^*\|C_2+2L_{F,x}+L_{F,\theta}+C_2D_\Lambda-\frac{1}{\gamma}\right)\|x_{k+1}-x_k\|^2
\notag\\
&\quad
+2\rho_k t_k(T_k+R_k).
\end{align}
Now, for any $\rho_k \equiv \rho >0$, let us choose $\gamma$ such that $\gamma
\leq
(\rho C_1+\|\lambda^*\|C_2+2L_{F,x}+L_{F,\theta}+C_2D_\Lambda)^{-1}$, then the coefficient of $\|x_{k+1}-x_k\|^2$ in \eqref{eq:eq:prop5c} is non-positive and may be discarded. Therefore, we have
\begin{align*}
    t_{k+1}\|\lambda_{k+1}-\lambda^*\|^2
\leq
t_k\|\lambda_k-\lambda^*\|^2
+2\rho t_k(T_k+R_k).
\end{align*}
Summing this inequality from $k=0$ to $K-1$ yields
\begin{align*}
    t_K\|\lambda_K-\lambda^*\|^2
\leq
t_0\|\lambda_0-\lambda^*\|^2
+2\rho\sum_{k=0}^{K-1}t_k(T_k+R_k).
\end{align*}
Since $t_k\in [\frac{1}{2},1]$ for all $k\geq 0$, it follows that
\begin{align*}
    \frac{1}{2}\|\lambda_K-\lambda^*\|^2
&\leq
\|\lambda_0-\lambda^*\|^2
+2\rho\sum_{k=0}^{K-1}(T_k+R_k) \notag\\
&\leq
\|\lambda_0-\lambda^*\|^2+2\rho\bigl(C_T+C_R(D_\Lambda)\bigr),
\end{align*}
where the last inequality follows from the fact that $\sum_{k=0}^{K-1}T_k\le C_T$ and $\sum_{k=0}^{K-1}R_k \leq C_R(D_\Lambda).$ Thus, we have
\begin{align*}
    \|\lambda_K-\lambda^*\|^2
\leq
2\|\lambda_0-\lambda^*\|^2+4\rho\left(C_T+C_R(D_\Lambda)\right)
=2\|\lambda_0-\lambda^*\|^2+\frac{2\rho}{\gamma} \|x_0-x^*\|^2 + 8\rho L_{F,\theta}D_{\Theta}^2+4\rho C_R\left(D_\Lambda \right),
\end{align*}
where in the last equality we used the definition of $C_T$. Since both $\gamma = \mathcal O(D_\Lambda)$ and $C_R\left(D_\Lambda \right)=\mathcal O(D_\Lambda)$, selecting $D_\Lambda\triangleq B + \sqrt{A}$, where $A \triangleq 
2\|\lambda_0-\lambda^*\|^2
+
2\rho \|x_0-x^*\|^2\bigl(\rho C_1+\|\lambda^*\|C_2+2L_{F,x}+L_{F,\theta}\bigr)
+
8\rho L_{F,\theta}D_\Theta^2
+4\rho\Bigg(
\Bigl(L_{F,\theta}D_X+3L_{x\theta}^{\Phi}D_X+2J\rho D_fL_{f,\theta}+2\sqrt{J}L_{f,\theta}\|\lambda^*\|\Bigr)
\sum_{k=0}^{\infty}\|\theta_k-\theta^*\| +L_{F,\theta}\Bigl(D_\Theta^2+2\sum_{k=0}^{\infty}\|\theta_k-\theta^*\|^2\Bigr) 
+\frac{L_{\lambda\theta}^{\Phi}}{2}\sum_{k=0}^{\infty}\frac{\|\theta_k-\theta^*\|^2}{\alpha_k}
+\frac{J\rho}{2}L_{f,\theta}^2\sum_{k=0}^{\infty}\|\theta_k-\theta^*\|^2
\Bigg)$ and $B \triangleq 2\rho C_2\|x_0-x^*\|^2+8\rho\sqrt{J}\,L_{f,\theta}\sum_{k=0}^{\infty}\|\theta_k-\theta^*\|$, implies that $\|\lambda_K-\lambda^*\|^2\leq D_\Lambda^2$ which completes the induction, hence, the result is proved. 
\end{proof}

\section{Proof of Theorem \ref{Thm:thm1}}\label{SecD}
\begin{proof}{Proof} 
From \eqref{eq:prop-p7}, for every $k\ge0$, $x\in X$, $\lambda \in \mathbb R^J_+ $, we have
\begin{align}\label{eq:thm2}
(x_{k+1}-x)^\top F(x,\theta^*)+ \sum_{j=1}^J \lambda^{(j)} f_j(x_{k+1},\theta^*)-\Phi_\rho(x,\lambda_k,\theta^*)
\le \mathcal{T}_k(x)+ \mathcal{R}_k(x,\lambda)+\frac{1}{2\rho}\bigl(\|\lambda_k-\lambda\|^2-\|\lambda_{k+1}-\lambda\|^2\bigr),
\end{align}
where we grouped terms as follows 
\begin{align*}
    \mathcal{T}_k(x)
&\triangleq \frac{1}{2\gamma}\bigl(\|x_k-x\|^2-\|x_{k+1}-x\|^2\bigr)
+ \frac{L_{F,x}}{2}\big(\|x_k-x_{k-1}\|^2-\|x_{k+1}-x_k\|^2\big)\\
&\quad + \big\langle r_{k+1},x_{k+1}-x\big\rangle - \big\langle r_k,x_k-x\big\rangle,\\
\mathcal{R}_k(x,\lambda)&\triangleq  
\left(\frac{\rho_k C_1 +\|\lambda^*\|C_2+2 L_{F,x}+ L_{F,\theta}}{2}  - \frac{1}{2\gamma_k} \right) \|x_{k+1} - x_k\|^2 +\frac{C_2}{2} \|\lambda_k-\lambda^*\| \|x_{k+1}-x_k\|^2 \\
&\quad + \tfrac{L_{F,\theta}}{2}\|\theta_{k+1}-\theta_k\|^2 + \tfrac{L_{F,\theta}}{2}\big(\|\theta_k-\theta_{k-1}\|^2-\|\theta_{k+1}-\theta_k\|^2\big)
+ L_{F,\theta}\|\theta_{k+1}-\theta^*\|\|x_{k+1}-x\| \\
&\quad + L^\Phi_{x\theta}\|\theta_k-\theta^*\|\big(\|x_{k+1}-x_k\|+\|x_k-x\|\big)
+ L^\Phi_{\lambda\theta}\|\lambda_{k+1}-\lambda\|\|\theta_k-\theta^*\| \\
& \quad +  \frac{J\rho}{2}L_{f,\theta}^2\|\theta_k-\theta^*\|^2
+ 2L_{f,\theta}(J\rho D_f+\sqrt{J}\|\lambda_k\|)\|\theta_k-\theta^*\|.
\end{align*}
Moving $-\Phi_\rho(x,\lambda_k,\theta^*)$ to the right in \eqref{eq:thm2}, then summing over $k=0, \cdots, K-1$ and dividing by $K$, and using convexity of each $f_j(\cdot, \theta^*)$ to pass the average inside, we get
\begin{align}\label{eq:thm3}
   (\bar{x}_K-x)^\top F(x,\theta^*)+  f(\bar{x}_K, \theta^*)^\top \lambda &\leq \frac{1}{K} \sum_{k=0}^{K-1} \Phi_\rho(x,\lambda_k,\theta^*) + \frac{1}{K} \sum_{k=0}^{K-1} \bigl(\mathcal{T}_k(x) +\mathcal{R}_k(x,\lambda)\bigr) \nonumber \\
   &\quad + \frac{1}{2\rho} \bigl( \|\lambda_0-\lambda\|^2 - \|\lambda_K-\lambda\|^2  \bigr).
\end{align}

Considering $\sum_{k=0}^{K-1} \mathcal T_k(x)$, telescoping in the $x$-terms and the $r_k$-pair, we have
\begin{align*}
    \sum_{k=0}^{K-1} \mathcal T_k(x)= \frac{1}{2\gamma}\left(\|x_0-x\|^2 -\|x_K-x\|^2 \right)-\frac{L_{F,x}}{2}\|x_K-x_{K-1}\|^2+ \big\langle r_K,x_K-x\big\rangle.
\end{align*}
Note that $\|r_K\|\leq L_{F,x}\|x_K-x_{K-1}\|+L_{F,\theta}\|\theta_K-\theta_{K-1}\|$, therefore $\big\langle r_K,x_K-x\big\rangle \leq \frac{L_{F,x}}{2}\|x_K-x_{K-1}\|^2+\frac{L_{F,x}}{2}\|x_K-x\|^2+ L_{F,\theta} \|\theta_K-\theta_{K-1}\|\|x_K-x\|$. Substituting this into the previous equality, we have 
\begin{align*}
    \sum_{k=0}^{K-1} \mathcal T_k(x)= \frac{1}{2\gamma}\|x_0-x\|^2 + \left( \frac{L_{F,x}}{2}-\frac{1}{2\gamma}\right)\|x_K-x\|^2 +L_{F,\theta}\|\theta_K-\theta_{K-1}\| \|x_K-x\|.
\end{align*}
Choosing $\gamma \in \left(0, L_{F,x}^{-1}\right)$, the coefficient of $\|x_K-x\|^2$ is non-positive and can be discarded. Since $x, x_o, x_K \in X$ and $\theta_K, \theta_{K-1} \in \Theta$, we have $\|x_0-x\|\leq D_X, \quad \|x_K-x\|\leq D_X$, and $\|\theta_K-\theta_{K-1}\| \leq 2D_\Theta$. Therefore,
\begin{align}\label{eq:thm5}
    \sum_{k=0}^{K-1}\mathcal T_k(x) \leq   \tfrac{1}{2\gamma} D_X^2 + 2L_{F,\theta}D_XD_\Theta \triangleq C'_T
\quad \forall K, \quad \forall x\in X.
\end{align}

Next, we focus on $\sum_{k=0}^{K-1} \mathcal{R}_k(x,\lambda)$. By continuity on the compact sets $X,\Theta$ we have that $\sup_{x\in X,\theta\in\Theta}|f_j(x, \theta^*)|\le D_f$ for all $j=1, \cdots, J$. Now, let the constant-parameters $\rho_k\equiv \rho>0$ and $\gamma_k\equiv \gamma>0$. By Lemma \ref{prop:prop2}, the dual sequence is bounded, and hence there exists $D_\Lambda>0$ such that $\|\lambda_k-\lambda^*\|\le D_\Lambda$ for all $k\ge 0.$ Therefore, $\frac{C_2}{2}\|\lambda_k-\lambda^*\|\,\|x_{k+1}-x_k\|^2
\leq
\frac{C_2D_\Lambda}{2}\|x_{k+1}-x_k\|^2$. Consequently, if the primal step-size is chosen so that $\gamma \in \Bigl(0,\,
\bigl(\rho C_1+\|\lambda^*\|C_2+2L_{F,x}+L_{F,\theta}+C_2D_\Lambda\bigr)^{-1}
\Bigr),$ then the whole coefficient of $\|x_{k+1}-x_k\|^2$ in $\mathcal R_k(x,\lambda)$ is non-positive and may be discarded. The remaining terms in $\mathcal R_k(x,\lambda)$ are controlled by the boundedness of $X$, $\Theta$, and $\{\lambda_k\}_{k\geq 0}$ together with the learning-error summability assumption. In particular, since $\|\lambda_k\|\leq \|\lambda_k-\lambda^*\|+\|\lambda^*\|\leq D_\Lambda+\|\lambda^*\|$, the last term $2L_{f,\theta}(J\rho D_f+\sqrt{J}\|\lambda_k\|)\|\theta_k-\theta^*\|$ is bounded by $2L_{f,\theta}\bigl(J\rho D_f+\sqrt{J}(D_\Lambda+\|\lambda^*\|)\bigr)\|\theta_k-\theta^*\|$ which together with $\frac{J\rho}{2}L_{f,\theta}^2\|\theta_k-\theta^*\|^2$ are summable by the learning-error assumption. Therefore, we conclude that $\sum_{k=0}^{K-1}\mathcal {R}_k(x,\lambda) \le C'_R(x,\lambda)$ where 
\begin{align}\label{eq:thm6}
C'_R(x,\lambda) \triangleq &
\Big(L_{F,\theta}D_X+ 2L^\Phi_{x\theta}D_X+ 2 L^\Phi_{\lambda\theta}D_\Lambda
+2L_{f,\theta}\big(J\rho D_f+\sqrt{J}(D_\Lambda+\|\lambda^*\|)\big)\Big)
\sum_{k=0}^{\infty}\|\theta_k-\theta^*\| \notag\\
&\quad
+\left(L_{F,\theta}+\frac{J\rho}{2}L_{f,\theta}^2\right)
\sum_{k=0}^{\infty}\|\theta_k-\theta^*\|^2<+\infty. 
\end{align}
Using \eqref{eq:thm5} and \eqref{eq:thm6} in \eqref{eq:thm3}, we get
\begin{align}\label{eq:thm8}
 &(\bar{x}_K-x)^\top F(x,\theta^*) +  f(\bar{x}_K, \theta^*)^\top \lambda \nonumber\\
& \quad\leq \frac{1}{K}\sum_{k=0}^{K-1}\Phi_\rho(x,\lambda_k, \theta^*)+ \frac{C'_T + C'_R(x,\lambda)}{K} +  \frac{1}{2\rho K} \bigl( \|\lambda_0-\lambda\|^2 - \|\lambda_K-\lambda\|^2  \bigr).
\end{align}
Let $x=x^*$ within \eqref{eq:thm8}. Since $x^*\in\mathcal{X}(\theta^*)$,
we have $\Phi_\rho(x^*,\lambda_k,\theta^*)\le0$ for all $k$, hence
\begin{align}\label{eq:thm10}
(\bar{x}_K-x^*)^\top F(x^*,\theta^*)
+ f(\bar{x}_K, \theta^*)^\top \lambda
\leq
\frac{C'_T + C'_R(x^*,\lambda)}{K}
+  \frac{1}{2\rho K} \bigl( \|\lambda_0-\lambda\|^2 - \|\lambda_K-\lambda\|^2  \bigr).
\end{align}
By Lemma \ref{lemma:lem1}, evaluated at $(x^*,\lambda^*)$, we have
\begin{align}\label{eq:thm11}
(\bar{x}_K-x^*)^\top F(x^*,\theta^*)
+ [f(\bar{x}_K, \theta^*)]_{+}^\top \lambda^*
\geq 0.
\end{align}
Subtracting \eqref{eq:thm11} from \eqref{eq:thm10} eliminates
$(\bar{x}_K-x^*)^\top F(x^*,\theta^*)$ and yields
\begin{align}\label{eq:thm12}
\Big(f(\bar{x}_K, \theta^*)^\top \lambda
- [f(\bar x_K, \theta^*)]_+^\top \lambda^*\Big)
\leq
\frac{C'_T + C'_R(x^*,\lambda)}{K}
+  \frac{1}{2\rho K} \bigl( \|\lambda_0-\lambda\|^2 - \|\lambda_K-\lambda\|^2  \bigr),
\quad
\forall \lambda \in \mathbb{R}_+^J.
\end{align}
Let $\lambda=\tilde\lambda$ within \eqref{eq:thm12} such that $\displaystyle \tilde\lambda_j \triangleq
\begin{cases}
1+\lambda_j^*, & \text{if } f_j(\bar{x}_K,\theta^*)>0\\
0, & \text{otherwise}
\end{cases}$, hence,
$f(\bar{x}_K,\theta^*)^\top\tilde\lambda
- [f(\bar{x}_K, \theta^*)]_+^\top\lambda^*
= \mathbf{1}^\top [f(\bar x_K, \theta^*)]_+$ %. Using also the boundedness of $\{\lambda_k\}$,
%in \eqref{eq:thm12} with $\lambda=\tilde\lambda$ gives
which implies that 
\begin{align}\label{rq:infeasibility}
\mathbf 1^\top [f(\bar{x}_K, \theta^*)]_+
\leq
\frac{C'_T + C'_R(x^*, \tilde\lambda)}{K}
+ \frac{J+\|\lambda_0-\lambda^*\|^2}{\rho K}
= \frac{C_{\mathrm{feas}}}{K},
\end{align}
where $C_{\mathrm{feas}} = C'_T +\Big(L_{F,\theta}D_X + 2L^\Phi_{x\theta}D_X + 2 L^\Phi_{\lambda\theta}D_\Lambda
+2L_{f,\theta}\big(J\rho D_f+\sqrt{J}(D_\Lambda+\|\lambda^*\|)\big)\Big)
\sum_{k=0}^{\infty}\|\theta_k-\theta^*\| 
+\left(L_{F,\theta}+\frac{J\rho}{2}L_{f,\theta}^2\right)
\sum_{k=0}^{\infty}\|\theta_k-\theta^*\|^2
+\frac{J+\|\lambda_0-\lambda^*\|^2}{\rho}$. 

Moreover, if we define $\epsilon \triangleq  \frac{C_{\mathrm{feas}}}{K}$ and the enlarged feasible set $ \mathcal{X}_{\epsilon}(\theta^*)
= \{x\in X \mid \mathbf{1}^\top[f(x,\theta^*)]_+ \leq \epsilon\}$. Then, $\sum_{j=1}^J [f_j(x,\theta^*)]_+ \leq \epsilon$, and using the
definition of $\Phi_\rho$ together with $\|\lambda_k\|\le D_\Lambda$ yields
\begin{align*}
\Phi_\rho(x,\lambda_k,\theta^*)
&=
\sum_{j=1}^J
\frac{\bigl([\lambda_k^{(j)}+\rho f_j(x,\theta^*)]_+\bigr)^2
- \bigl(\lambda_k^{(j)}\bigr)^2}{2\rho} \\
&\le
\sum_{j=1}^J
\Bigl(\lambda_k^{(j)}[f_j(x,\theta^*)]_+
+ \frac{\rho}{2}[f_j(x,\theta^*)]_+^2\Bigr) \\
&\le
D_\Lambda\,\mathbf 1^\top [f(x,\theta^*)]_+
+ \frac{\rho}{2}\bigl(\mathbf 1^\top [f(x,\theta^*)]_+\bigr)^2 \\
&\le
D_\Lambda \epsilon
+ \frac{\rho}{2}\epsilon^2.
\end{align*}
Note that this bound is uniform in $k$. Therefore, we have
$ \frac{1}{K}\sum_{k=0}^{K-1}\Phi_\rho(x,\lambda_k,\theta^*)
\leq
D_\Lambda \epsilon
+ \frac{\rho}{2} \epsilon^2.$
Now, let $\lambda=\mathbf{0}$ in \eqref{eq:thm8} to conclude that
\begin{align}\label{eq:thm9}
(\bar{x}_K-x)^\top F(x,\theta^*)
&\leq
\frac{1}{K}\sum_{k=0}^{K-1}\Phi_\rho(x,\lambda_k,\theta^*)
+ \frac{C'_T + C'_R(x,0)}{K} \notag\\
&\leq
D_\Lambda \epsilon
+ \frac{\rho}{2} \epsilon^2 + \frac{C'_T + C'_R(x,0)}{K}\notag\\
& \leq D_\Lambda \frac{C_{\mathrm{feas}}}{K}
+ \frac{\rho}{2} \left(\frac{C_{\mathrm{feas}}}{K}\right)^2+  \frac{C'_T + C'_R(x,0)}{K}.
\end{align}
Taking the supremum over $x\in\mathcal{X}_{\epsilon_K}(\theta^*)$ gives
\begin{align}\label{eq:relaxed_gap}
    \widetilde{\mathrm{Gap}}(\bar x_K,\theta^*)
\triangleq
\sup_{x\in\mathcal{X}_{\epsilon_K}(\theta^*)}
F(x,\theta^*)^\top(\bar x_K-x)
\leq
\frac{C'_T + C'_R(x,0)}{K}
+ D_\Lambda \frac{C_{\mathrm{feas}}}{K}
+ \frac{\rho}{2}\left(\frac{C_{\mathrm{feas}}}{K}\right)^2,
\end{align}
where $C'_T$, and $C_{\mathrm{feas}}$ are defined as above and $C'_R(x,0)= \Big(L_{F,\theta}D_X + 2L^\Phi_{x\theta}D_X + 2L^\Phi_{\lambda\theta}D_\Lambda
+2L_{f,\theta}\big(J\rho D_f+\sqrt{J}(D_\Lambda+\|\lambda^*\|)\big)\Big)\sum_{k=0}^{\infty}\|\theta_k-\theta^*\|
+\left(L_{F,\theta}+\frac{J\rho}{2}L_{f,\theta}^2\right)\sum_{k=0}^{\infty}\|\theta_k-\theta^*\|^2.$
\end{proof}
\end{appendices}
\end{document}